\let\OLDthebibliography\thebibliography
\renewcommand\thebibliography[1]{
  \OLDthebibliography{#1}
  \setlength{\parskip}{0pt}
  \setlength{\itemsep}{0pt plus 0.3ex}
}
\newtheorem{thm}{Theorem}[section]
\newtheorem{lemma}[thm]{Lemma}
\newtheorem{prop}[thm]{Proposition}
\theoremstyle{definition}
\newtheorem{defn}[thm]{Definition}
\newtheorem{example}[thm]{Example}
\newtheorem{obs}[thm]{Observation}
\theoremstyle{remark}
\numberwithin{equation}{section}
\newcommand*\wrapletters[1]{\wr@pletters#1\@nil}
\def\wr@pletters#1#2\@nil{#1\allowbreak\if&#2&\else\wr@pletters#2\@nil\fi}
\def \bbA {\mathbb A}
\def \bC {\mathbb C}
\def \bP {\mathbb P}
\def \bQ {\mathbb Q}
\def \bR {\mathbb R}
\def \bZ {\mathbb Z}
\def \deg {\mathrm{deg}}
\def \Res {{\mathrm{Res}}}
\def \Gal {{\mathrm{Gal}}}
\def \Aut{{\mathrm{Aut}}}
\begin{document}
\title[Lines of polynomials with alternating Galois group]{Lines of polynomials with alternating Galois group}
\author[Nuno Arala]{Nuno Arala}
\email{Nuno.Arala-Santos@warwick.ac.uk}
\thanks{}
\date{}
\begin{abstract} We study the problem of existence of one-parameter, linear families of polynomials of degree $n$ all of whose polynomials have Galois group $A_n$. The methods we use have a strong geometric flavour.
\end{abstract}
\maketitle

\section{Introduction}
\label{intro}

For a positive integer $n\geq3$ and a transitive subgroup $G\leq S_n$, denote by $N_G(H)$ the number of polynomials
$$f(X)=X^n+a_{n-1}X^{n-1}+\cdots+a_0$$
whose coefficients $a_{n-1},\ldots,a_0$ are integers lying in the interval $[-H,H]$, and such that the Galois group $G_f$ of $f$ over $\bQ$ is isomorphic to $G$. Recall that the Galois group of $f(X)$ is the Galois group of the splitting field of $f$ over $\mathbb{Q}$, which we can regard as a subgroup of $S_n$ via its action on the roots of $f$. Starting in the 19th century with Hilbert and his so-called Irreducibility Theorem, which can be thought of as the statement that $N_{G}(H)=o(H^n)$ whenever $G\neq S_n$, there has been a lot of interest in establishing upper bounds for the numbers $N_G(H)$ for different groups $G$. Recent progress on this topic includes the work of Chow and Dietmann \cite{CD2020} on the cases $n=3,4$, the work of Xiao \cite{Xiao} on $A_3$-cubics and Bhargava's paper \cite{Bhargava} proving the long-standing Van der Waerden's conjecture.

The work we present in this paper arose from an attempt to complement this work with a focus on the dual goal of constructing large families of polynomials with prescribed Galois groups. There is an abundance of work on constructive approaches to the Inverse Galois Problem in the literature, as an example of which we refer the reader to Smith's work \cite{Smith}. We focus on one-parameter families of polynomials of degree $n$ with Galois group $G=A_n$, the alternating group consisting of the even permutations in $S_n$. If the coefficients of the polynomials in our family are themselves polynomials of degree at most $k$ in the defining parameter $T$, then the number of polynomials in the family with coefficients bounded by $H$ is $\gg H^{\frac{1}{k}}$. Since the goal is to construct large families it is natural to give special attention to those for which we can take $k=1$, i.e. those that depend linearly on a parameter.

This leads to the study of \emph{lines} of polynomials with Galois group $A_n$. By this we mean a linear family of polynomials parameterized by an integer $T$, of the form
\begin{equation}
P_T(X)=P(X)-TQ(X)
\label{lines}
\end{equation}
where $P\in\mathbb{Z}[X]$ is a monic polynomial of degree $n$ and $Q\in\mathbb{Z}[X]$ is a nonzero polynomial of degree at most $n-1$, such that $G_{P_T}\cong A_n$ for every integer $T$. Two questions arise naturally:
\begin{enumerate}[label=(\arabic*)]
\item For which values of $n\geq 3$ does there exist a line of polynomials as described in \eqref{lines}?
\item For those values of $n$, can we give a complete classification of such lines?
\end{enumerate}
The present paper is devoted to some progress toward these questions. Following ideas from Mestre \cite{Mes1990}, we are able to prove that such lines exist for every odd $n$.
\begin{thm}
\label{oddcase}
Let $n\geq 3$ be odd. Then there exists a line of polynomials of degree $n$ with Galois group $A_n$ over $\bQ$.
\end{thm}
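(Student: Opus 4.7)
The plan is to follow Mestre's geometric construction from \cite{Mes1990}. We seek a monic $P(X) \in \bZ[X]$ of degree $n$ and a nonzero $Q(X) \in \bZ[X]$ of degree at most $n-1$ such that the Wronskian $W(X) := P(X) Q'(X) - P'(X) Q(X)$ is, up to a rational constant, the square of a polynomial $S(X) \in \bQ[X]$. Because $n$ is odd, $n-1$ is even, and a natural route is to choose $R(X) \in \bQ[X]$ of degree $(n-1)/2$, define $P(X)$ by $P'(X) = n R(X)^2$ (forcing $\deg P = n$), and then solve for a matching $Q$ of degree $n-1$ that turns $W$ into a perfect square; the parity hypothesis enters precisely here, as only then can one arrange the ramification data of the rational map $\phi := P/Q \colon \bP^1 \to \bP^1$ in this very constrained way.

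The point of this construction is that the discriminant $\Delta(T) := \mathrm{disc}_X(P - TQ)$, viewed as a polynomial in $T$, becomes a perfect square in $\bQ[T]$. Indeed, the roots of $\Delta(T)$ are the critical values of $\phi$, namely the numbers $\phi(\xi)$ with $\xi$ a zero of $W$, and the multiplicity of $\phi(\xi)$ as a root of $\Delta(T)$ equals $e_\xi - 1$, where $e_\xi$ is the ramification index of $\phi$ at $\xi$. Since $W = c \cdot S(X)^2$ with $S$ having simple zeros, every $\xi$ is a double zero of $W$, so $e_\xi = 3$ and every critical value contributes $2$ to the multiplicity. Hence $\Delta(T) = c' D(T)^2$ for some $D \in \bQ[T]$ and $c' \in \bQ^\times$; a further rescaling of $Q$ absorbs $c'$ into a square, making $\Delta(T)$ a genuine square in $\bQ[T]$. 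In particular, $\Delta(T_0)$ is a rational square for every integer $T_0$ at which $P_{T_0}$ is separable, forcing $G_{P_{T_0}} \subseteq A_n$.

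To conclude $G_{P_{T_0}} = A_n$ we argue in two stages. First, over $\bQ(T)$ the Galois group of $P - TQ$ is indeed $A_n$: its local monodromies around the branch points of $\phi$ are $3$-cycles (by the computation $e_\xi = 3$ above), the cover is transitive because $P - TQ$ is irreducible over $\bQ(T)$, and transitive subgroups of $S_n$ generated by $3$-cycles are all of $A_n$. Second, under specialisation to an integer $T_0$ the group can only shrink, and one needs to rule out proper transitive subgroups of $A_n$. A careful choice of $(P, Q)$---for example arranging that $P_{T_0}$ has Eisenstein behaviour at some fixed prime independently of $T_0$, or that its reduction modulo a suitable prime has a prescribed factorisation---should ensure simultaneously the irreducibility of $P_{T_0}$ over $\bQ$ and the presence in $G_{P_{T_0}}$ of a cycle (such as an $n$-cycle) which, combined with the even-permutation constraint, forces the full alternating group.

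I expect this last point to be the main obstacle: Hilbert irreducibility only yields $G_{P_{T_0}} = A_n$ for $T_0$ outside a thin set, whereas the theorem demands it for \emph{every} integer $T_0$. The crux of the proof will therefore lie in extracting group-theoretic information uniformly in $T_0$ from the explicit arithmetic of Mestre's pair $(P, Q)$, most plausibly via $p$-adic or mod-$p$ arguments whose conclusion is independent of the specialising integer.
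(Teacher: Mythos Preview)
Your overall direction (Mestre's Wronskian construction) matches the paper's, but there are two genuine gaps. First, the claim that ``a further rescaling of $Q$ absorbs $c'$ into a square'' is false: replacing $Q$ by $\lambda Q$ merely reparametrises $T\mapsto\lambda T$, so $\Delta(P-T\lambda Q)=c'D(\lambda T)^2$ and the class of $c'$ in $\bQ^\times/\bQ^{\times2}$ is unchanged. The paper sidesteps this by building the construction around a \emph{given} monic $P$ with $G_P=A_n$ (so $\Delta(P)$ is already a square) and then producing $Q,R$ with $PQ'-P'Q=R^2$; a resultant manipulation (Lemma~\ref{p-tq}) then shows $\Delta(P-tQ)$ is a square for every $t$, the value at $t=0$ pinning down the constant. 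Incidentally, the existence of such $Q,R$ is established not by your suggested route (fixing $P'=nR^2$ and ``solving for $Q$'', which is where the parity is meant to enter but is left vague) but by writing the problem as a linear system over the splitting field whose coefficient matrix is antisymmetric of odd size $n$, hence singular; Galois descent then brings $R$ down to $\bQ$.

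Second, and as you yourself flag, the step from ``generic Galois group $A_n$'' to ``Galois group $A_n$ for \emph{every} integer $T_0$'' is the crux, and your proposal does not resolve it. The paper's device is clean: since $G_P=A_n$, Chebotarev supplies primes $r,p,q$ such that $P\bmod r$ is irreducible, $P\bmod p$ has an irreducible factor of prime degree $\ell$ with $n/2<\ell\le n$, and $P\bmod q$ has an irreducible cubic factor. One then restricts to the sub-line $P-pqrN\cdot Q$ with $N\in\bZ$. For every $N$ the reduction modulo each of $r,p,q$ coincides with that of $P$ itself, so Dedekind's theorem yields transitivity (hence irreducibility), an $\ell$-cycle (hence primitivity), and a $3$-cycle, uniformly in $N$; Jordan's theorem then forces $G_{P_{pqrN}}\supseteq A_n$, and the square discriminant gives equality. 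This is exactly the ``mod-$p$ argument whose conclusion is independent of the specialising integer'' you were looking for, but it hinges on having started from a $P$ with known Galois group $A_n$, which your construction does not provide.
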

For even $n$ the situation is more complicated. Nevertheless we have been able to show that lines of polynomials of degree $n$ with alternating Galois group exist for infinitely many even values of $n$. This is the content of the following Theorem.
\begin{thm}
\label{n-1square}
Let $n$ be an even positive integer such that $n-1$ is a square. Then there exists a line of polynomials of degree $n$ with Galois group $A_n$ over $\bQ$.
\end{thm}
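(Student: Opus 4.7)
The plan is to adapt the construction used in the proof of Theorem~\ref{oddcase} by exhibiting explicit $P,Q\in\bZ[X]$ with $\deg P=n$ and $\deg Q=n-1$ for which the Wronskian
\[
W(X) \;=\; P'(X)Q(X)-P(X)Q'(X) \;=\; c\,R(X)^{2}
\]
is a constant times a perfect square, with $c\in\bZ$ and squarefree $R\in\bZ[X]$ of degree $n-1$. Geometrically, this forces the rational map $\varphi=P/Q\colon\bP^{1}\to\bP^{1}$ to have critical locus consisting of the $n-1$ simple zeros of $R$, each of ramification index $3$; with no ramification over $\infty$ (since $\deg Q=n-1$ and $Q$ is squarefree), Riemann--Hurwitz is saturated. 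Each branch monodromy element is then a $3$-cycle, so the monodromy group lies in $A_n$, and transitivity together with primitivity of the generic cover forces it, by Jordan's theorem, to equal $A_n$. Thus the Galois group of $P_T$ over $\bQ(T)$ is $A_n$.

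The discriminant computation is where the arithmetic hypothesis intervenes. Combining the standard identity $\mathrm{disc}_X(P_T)=(-1)^{n(n-1)/2}\,\Res_X(P_T,P_T')$ with $P_T'(\alpha)=W(\alpha)/Q(\alpha)$ at roots $\alpha$ of $P_T$, and observing that $\Res_X(P_T,Q)=\Res(P,Q)$ is \emph{independent} of $T$ (because $Q(\beta)=0$ kills the $TQ$-term at the roots of $Q$), one obtains
\[
\mathrm{disc}_X(P_T) \;=\; (-1)^{n(n-1)/2}\,\frac{c^{n}}{\Res(P,Q)}\;\Res_X(P_T,R)^{2}.
\]
For $n$ even the factor $c^{n}$ is automatically a rational square, so $\mathrm{disc}_X(P_T)$ is a square in $\bQ[T]$ precisely when $(-1)^{n(n-1)/2}\Res(P,Q)$ lies in $(\bQ^{\times})^{2}$. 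The Hurwitz-space parametrization of degree-$n$ triple-point covers of $\bP^{1}$ provides a formula for this square class that, modulo controllable squares, reduces to $\pm(n-1)$; it is precisely the hypothesis $n-1=m^{2}$ that removes the remaining obstruction. Once such $(P,Q)$ is located, $\mathrm{disc}_X(P_T)$ is an integer square for every integer $T$, which places $G_{P_T}\le A_n$.

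It remains to upgrade this, for every integer $T$, to equality $G_{P_T}=A_n$. Irreducibility of $P_T$ uniformly in $T$ can be secured locally: by arranging the construction so that $P\equiv X^{n}\pmod p$ and $Q$ is a $p$-adic unit for some prime $p$, the Newton polygon of $P_T$ at $p$ forces $P_T$ to be irreducible for every $T\in\bZ$. To exclude proper transitive subgroups of $A_n$, one extracts Frobenius elements of prescribed cycle type (in particular a $3$-cycle) from reductions of $P_T$ modulo suitable primes, applying a Jordan-type classification to conclude $G_{P_T}=A_n$. The main obstacle throughout is the first step: constructing $(P,Q)\in\bZ[X]^{2}$ satisfying both the Wronskian identity and the square-class condition on $\Res(P,Q)$. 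This is a rationality question on the Hurwitz space of triple-point covers of $\bP^{1}$, and the arithmetic condition $n-1=m^{2}$ is what unlocks the existence of a $\bQ$-rational point on the relevant component.
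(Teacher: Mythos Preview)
Your proposal is a strategy outline rather than a proof, and the gaps are precisely at the points where the actual work lies. You explicitly acknowledge that ``the main obstacle throughout is the first step: constructing $(P,Q)$'' and then defer this to an unproved rationality claim about a Hurwitz space. Likewise, the assertion that the square class of $(-1)^{n(n-1)/2}\Res(P,Q)$ ``reduces to $\pm(n-1)$'' is stated without any computation or reference; nothing in your setup forces this, and for a generic pair $(P,Q)$ with $P'Q-PQ'=cR^2$ there is no reason the resultant should have this particular square class. The primitivity of the monodromy group is also asserted rather than argued: a transitive subgroup of $S_n$ generated by $3$-cycles need not be primitive in general, and you have not explained why your cover cannot factor. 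Finally, the local arrangements you propose at the end (forcing $P\equiv X^n\pmod p$, controlling Newton polygons) may well be incompatible with the Wronskian constraint, which you have not made explicit.

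The paper's proof is entirely different in shape and fully explicit. Instead of a cover with $n-1$ triple ramification points and $Q$ of degree $n-1$, it uses a cover with only \emph{three} branch points, of ramification types $(n-1,1)$, $(n-1,1)$, $(3,1,\ldots,1)$, realised by the concrete choice
\[
P(X)=X^n-nX^{n-1},\qquad Q(X)=-nX+(n-2)^2,
\]
so that $Q$ is \emph{linear}. The monodromy is then generated by two $(n-1)$-cycles and a $3$-cycle; the $(n-1)$-cycle makes the group $2$-transitive (hence primitive) for free, and Jordan's theorem gives $A_n$. The discriminant is computed in closed form as
\[
\Delta(P-TQ)=(-1)^{n/2+1}(n-1)^{n-1}\,n^{n}\,T^{\,n-2}\bigl(T-(n-2)^{n-2}\bigr)^{2},
\]
and one reads off directly that this is a square in $\bQ[T]$ when $n-1$ is a square (noting $n\equiv 2\pmod 4$ in that case). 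This is where the hypothesis genuinely enters, not via an abstract square-class statement on a moduli space. If you want to salvage your approach, you would need to \emph{produce} the pair $(P,Q)$ and \emph{compute} $\Res(P,Q)$, not merely predict its square class.
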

In order to state our next results it is convenient to explain a natural extension of the main problem. It is natural to wonder what happens if one relaxes the condition that the polynomials in our lines must have Galois group $A_n$ over $\bQ$, but instead requires that they have Galois group $A_n$ over an appropriate number field. It turns out that, by just allowing quadratic extensions of $\bQ$, we obtain satisfactory lines for \emph{every} integer $n\geq3$. This is the content of the following result, which is proved in exactly the same way as Theorem \ref{n-1square}.
\begin{thm}
\label{evencase}
Let $n\geq3$ be even. Then there exists a line of polynomials of degree $n$ with Galois group $A_n$ over $K$, where $K$ is some number field with $[K:\bQ]\leq2$.
\end{thm}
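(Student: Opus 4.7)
The plan is to rerun the construction from Theorem \ref{n-1square} verbatim, but, rather than requiring a certain residual constant to be a rational square (which is the role of the hypothesis in Theorem \ref{n-1square}), to pass to a quadratic extension $K$ of $\bQ$ containing its square root. Concretely, the construction behind Theorem \ref{n-1square} produces a pair $(P,Q)\in\bZ[X]^2$, with $P$ monic of degree $n$ and $Q$ nonzero of degree at most $n-1$, for which the discriminant $D(T)=\mathrm{disc}_X\bigl(P(X)-TQ(X)\bigr)$ is a perfect square in $\bQ[T]$. For general even $n$ the same construction yields $D(T)=c\,R(T)^2$ with $R\in\bQ[T]$ and $c\in\bQ^{\times}$ depending only on $n$; the hypothesis of Theorem \ref{n-1square} is precisely what forces $c$ to be a rational square.

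Setting $K=\bQ(\sqrt{c})$, one has $[K:\bQ]\le 2$ and $\sqrt{D(T)}=R(T)\sqrt{c}\in K$ for every integer $T$, forcing $G_{P_T}\subseteq A_n$ over $K$. The two remaining ingredients of the proof of Theorem \ref{n-1square}, namely irreducibility of $P_T$ for every $T\in\bZ$ and the identification of the generic Galois group of $P_T$ over $K(T)$ with $A_n$, transfer unchanged to the $K$-rational setting: both arguments require only the existence of a square root of the discriminant in the ambient base field, not the rationality of that square root over $\bQ$.

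The most delicate step, as in the proof of Theorem \ref{n-1square}, is to preclude at each integer value of $T$ a fall of the specialized group $G_{P_T}$ into a proper transitive subgroup of $A_n$. This is typically handled by exhibiting, for every such $T$, an element of $G_{P_T}$ of a cycle type incompatible with every such subgroup---for instance a long cycle produced by reduction modulo a well-chosen prime. Because the list of transitive subgroups of $A_n$ is purely combinatorial and is unaffected by base change from $\bQ$ to $K$, the cycle-type obstructions from the proof of Theorem \ref{n-1square} carry over mutatis mutandis. I expect this to be the main obstacle of the adaptation: the rest is essentially bookkeeping, but the cycle arguments themselves do not trivialise and must be rechecked after passing to $K$.
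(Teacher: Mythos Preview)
Your plan is essentially the paper's own: the same pair $(P,Q)$ is constructed, the discriminant of $P-TQ$ is observed to have the form $c\cdot R(T)^2$ with $c\in\bQ^\times$, and one passes to $K=\bQ(\sqrt{c})$ to force the generic Galois group into $A_n$; the final step is Hilbert irreducibility together with the Dedekind/Chebotarev argument of Observation~\ref{obs}, exactly as you sketch. One clarification is worth making: in the paper the logical dependency runs the \emph{other} way---Theorem~\ref{evencase} is proved first, and Theorem~\ref{n-1square} is then derived from it by an explicit discriminant computation---so the ``identification of the generic Galois group with $A_n$'' that you attribute to Theorem~\ref{n-1square} is in fact the monodromy argument via Theorem~\ref{cover}, carried out over $\overline{\bQ}(T)$ and entirely independent of whether the discriminant is a square over the base (it is this geometric fact that forces $D(T)$ to be a square in $\overline{\bQ}[T]$, hence of the shape $c\cdot R(T)^2$ over $\bQ$).
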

We then do an exhaustive analysis of the quartic case. We are actually able to classify all the lines of polynomials of degree $4$ with Galois group $A_4$ over $\bQ$, and this is the content of the next result we present. However, in order to make the presentation more compact, it is convenient to change our language a bit. First of all we will no longer impose that the coefficients of the polynomials defining our lines are integers, but only rationals. It is straightforward to go from a line of polynomials with rational coefficients to a line of polynomials with integer coefficients if one wants to do so. The natural alternating condition then becomes the statement that the polynomial $P-TQ$, viewed as a polynomial in $\bQ(T)[X]$, has Galois group $A_n$ over $\bQ(T)$. In other words, we look for lines whose polynomials are \emph{generically} $A_n$-polynomials. Hilbert's Irreducibility Theorem guarantees that this condition and our original one are not too far apart, and indeed we will observe later (Observation \ref{obs}) that it is easy to go from a line of $A_n$-polynomials in this apparently weaker sense to a line of $A_n$-polynomials in our original sense, so that this change is more for notational convenience. We introduce also a useful equivalence relation on lines of polynomials. Given a field $K$, we may define a right action of the group $\mathrm{GL}_2(K)$ on the space of polynomials in $K[X]$ of degree at most $n$ as follows: if
$$\gamma=\begin{pmatrix}a&b\\c&d\end{pmatrix}$$
then
$$(P\gamma)(X)=P\left(\frac{aX+b}{cX+d}\right)(cX+d)^n\text{.}$$
We observe that this action preserves lines of polynomials. Moreover, given a line $P-TQ$ of polynomials of degree $n$, we may define a left action on it by mapping it under $\gamma$ to the new line $P^\star-TQ^\star$ where $P^\star=aP+bQ$ and $Q^\star=cP+dQ$. These two actions commute with each other, and both preserve the generic Galois group (i.e. the Galois group over $K(T)$). We will say that two lines of polynomials in $K[X]$ are \emph{equivalent} if they lie in the same orbit with respect to this double action of $\mathrm{GL}_2(K)$. We are now ready to state our classification result.
\begin{thm}
\label{class4}
Any line of polynomials of degree $4$ in $\bQ[X]$ with generic Galois group $A_4$ over $\bQ$ is equivalent to a line of the form \eqref{lines} with
$$P(X)=X^4-2kX^2-8mX+k^2$$
and
$$Q(X)=X^3+kX+m$$
where $k,m\in\bQ$ are such that $-4k^3-27m^2$ is a square in $\bQ$.
\end{thm}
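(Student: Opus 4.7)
The strategy is to associate to the line $P - TQ$ the degree-$4$ rational map $\phi = P/Q : \bP^1_\bQ \to \bP^1_\bQ$, whose monodromy equals the generic Galois group $A_4$, and to exploit the geometric rigidity of such covers. Applying Riemann-Hurwitz to $\phi$ yields total ramification $6$, and since the non-identity elements of $A_4 \subseteq S_4$ have only cycle types $(2,2)$ and $(3,1)$ (each contributing $2$ to the ramification at a branch point), $\phi$ has exactly three branch values with profiles $(2,2)$, $(3,1)$, $(3,1)$. The unique $(2,2)$ branch value lies in $\bP^1(\bQ)$, while the two $(3,1)$ branch values form a single Galois orbit.

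Using the target translation $T \mapsto T + b$ (which replaces $P$ by $P - bQ$) followed by a source translation $X \mapsto X + c$, one can simultaneously eliminate the $X^3$-coefficient of $P$ and the $X^2$-coefficient of $Q$, reducing $(P,Q)$ to the form $P = X^4 + pX^2 + qX + r$, $Q = X^3 + uX + v$. The residual $\mathrm{GL}_2$-freedom is a source scaling $X \mapsto \lambda X$ with a corresponding rescaling of $T$.

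The Wronskian $W = P'Q - PQ'$ has degree $6$ and vanishes at the critical points of $\phi$; the $(2,2),(3,1),(3,1)$ ramification profile forces $W = R(X)^2 \cdot S(X)$ for quadratics $R, S \in \bQ[X]$, whose roots are respectively the two triple and the two double critical points (both pairs are Galois-stable). Comparing coefficients of this factorization with the Wronskian expressed in terms of $p,q,r,u,v$, and adding the condition that $\phi$ takes a common value at the two roots of $S$ (so they give a single $(2,2)$ branch rather than two separate simple branches, the latter being incompatible with $A_4$-monodromy), produces a system of polynomial identities. Solving the system and absorbing the residual scaling shows every such normalized line reduces to $P(X) = X^4 - 2kX^2 - 8mX + k^2$, $Q(X) = X^3 + kX + m$ for some $k, m \in \bQ$.

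Finally, an asymptotic analysis as $T \to \infty$---three of the four roots of $P - TQ$ approximate the roots of $Q$ and the fourth behaves like $X \sim T$---yields
\[
\mathrm{Disc}_X(P - TQ) \;=\; \mathrm{Disc}(Q) \cdot B(T)^2,
\]
where $B(T) \in \bQ[T]$ vanishes at the three branch values. Since $B(T)^2$ is automatically a square in $\bQ[T]$, the discriminant is a square in $\bQ(T)$---equivalently, the generic Galois group is contained in $A_4$---precisely when $\mathrm{Disc}(Q) = -4k^3 - 27m^2$ is a square in $\bQ$. The main technical hurdle is the explicit coefficient matching in the third paragraph: one must carefully handle the algebraic system cut out by the factorization $W = R^2 S$ together with the common-value condition on $S$, and solve it in closed form to recover the exact parameterization of the theorem.
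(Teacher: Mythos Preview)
Your proposal rests on a fundamental misidentification: you equate the monodromy of $\phi$ with the generic Galois group over $\bQ(T)$, but these are different objects. The monodromy is the Galois group over $\overline{\bQ}(T)$, and the paper proves (Theorem~\ref{nostrong}, via Lemma~\ref{prenostrong}) that for a line defined over $\bQ$ this geometric Galois group is \emph{never} $A_4$. In particular, the ramification profile you posit, namely one $(2,2)$ fiber and two $(3,1)$ fibers, cannot occur for a morphism defined over $\bQ$: the paper's Case~2 analysis shows that the resulting discriminant condition forces the positive-definite form $3u^2-2uc+3c^2$ to represent $-1$ over $\bQ$, which is impossible. If you carried your coefficient-matching to its conclusion you would find an empty solution set, not the family in the theorem.

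The correct picture is the one the paper establishes: all three non-reduced fibers have profile $(2,2)$, so the geometric monodromy is only $V_4$. The Galois group over $\bQ(T)$ is nonetheless $A_4$ because the three branch values are \emph{not} individually rational; they form a single $G_\bQ$-orbit and are the roots of an irreducible cubic $X^3+kX+m$ (Lemma~\ref{nofixed} is used to rule out any of them being $\bQ$-rational). The parametrization then comes not from a Wronskian factorization but from the observation that $P-\alpha Q$ is a perfect square for each root $\alpha$ of this cubic, which produces a map from $\bP^1_\bQ$ to a conic in $\bP^2_\bQ$; choosing a standard isomorphism with that conic yields the explicit $P$ and $Q$. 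Your final discriminant formula $\mathrm{Disc}(Q)\cdot B(T)^2$ happens to be correct for the resulting family, but the route you propose to reach that family does not get there.
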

Using unique factorization in the ring of integers of $\bQ(\sqrt{-3})$ it is not hard to parameterize the pairs $(k,m)$ of rationals for which $-4k^3-27m^2$ is a square, and doing so one obtains a more explicit form of Theorem \ref{class4}.

The language we introduced is necessary in order to fully appreciate the most subtle details of the quartic case. It turns out that, in the cases covered by Theorems \ref{oddcase} and \ref{n-1square}, the lines can be obtained in such a way that the corresponding Galois group of $P(X)-TQ(X)$ is $A_n$ over $\overline{\bQ}(T)$, and not just over $\bQ(T)$, where the bar denotes algebraic closure. In other words, given any number field $K$, a generic polynomial in such lines has Galois group $A_n$ over $K$, and not only over $\bQ$. This is in stark contrast with the quartic case. Even though there exist lines of quartic polynomials with Galois group $A_4$ over $\bQ$, all polynomials in any such line have Galois group strictly smaller than $A_4$ over some specific cubic extension of $\bQ$ that depends on the given line. In the notation of Theorem \ref{class4} this cubic extension is the splitting field of $X^3+kX+m$ over $\bQ$.

For a number field $K$, we will call lines that have Galois group $A_n$ both over $K(T)$ and over $\overline{\bQ}(T)$ \emph{strong alternating lines over $K$}, and those that have Galois group $A_n$ over $K(T)$ but strictly smaller Galois group over $\overline{\bQ}(T)$ \emph{weak alternating lines over $K$}. Our previous remark about the quartic case can be stated as follows.
\begin{thm}
\label{nostrong}
There are no strong alternating lines of quartic polynomials over $\bQ$.
\end{thm}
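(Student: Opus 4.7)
My plan is to combine the classification in Theorem \ref{class4} with a direct computation of the resolvent cubic of the quartic $P_T(X):=P(X)-TQ(X)$. Since the double $\mathrm{GL}_2(\bQ)$-action is defined over $\bQ$, it commutes with the actions of $\Gal(\overline{\bQ(T)}/\bQ(T))$ and of $\Gal(\overline{\bQ(T)}/\overline{\bQ}(T))$, so being strong alternating is an equivalence invariant. Hence by Theorem \ref{class4} it suffices to show that whenever $P(X)=X^4-2kX^2-8mX+k^2$ and $Q(X)=X^3+kX+m$ for some $k,m\in\bQ$ with $-4k^3-27m^2$ a nonzero square in $\bQ$, the geometric Galois group of $P_T$ over $\overline{\bQ}(T)$ is a proper subgroup of $A_4$.

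To carry this out I would compute the resolvent cubic of $P_T$, whose splitting behaviour detects the passage from $A_4$ to its normal Klein four-subgroup $V_4$. Writing $P_T=X^4+pX^3+qX^2+rX+s$ with $p=-T$, $q=-2k$, $r=-(8m+kT)$, $s=k^2-mT$ and applying the standard formula $Y^3-qY^2+(pr-4s)Y-(p^2 s+r^2-4qs)$, the resolvent cubic becomes
\[
R(Y)=Y^3+2kY^2+(kT^2+12mT-4k^2)Y+mT^3-2k^2T^2-8kmT-8k^3-64m^2.
\]
The central claim is that for every root $\alpha$ of $Q(X)=X^3+kX+m$ in $\overline{\bQ}$, the element $Y_\alpha:=\alpha T+2k+4\alpha^2$ is a root of $R(Y)$. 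I would verify this by substitution and collection by powers of $T$: the coefficient of $T^3$ is literally $\alpha^3+k\alpha+m$, and the coefficients of $T^2$, $T^1$, and $T^0$ factor, after applying the relation $\alpha^3+k\alpha+m=0$, into expressions that manifestly vanish. For instance, the constant-in-$T$ part equals $(b+2k)^2(b-2k)-64m^2$ with $b=2k+4\alpha^2$, which reduces to $64\alpha^2(\alpha^2+k)^2-64m^2=0$ via $m=-\alpha(\alpha^2+k)$.

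Granted the claim, the three roots $\alpha_1,\alpha_2,\alpha_3$ of $X^3+kX+m$ (distinct because the discriminant $-4k^3-27m^2$ is nonzero) produce three distinct elements $Y_{\alpha_1},Y_{\alpha_2},Y_{\alpha_3}\in\overline{\bQ}(T)$, pairwise distinct because their $T$-coefficients are the distinct $\alpha_i$. Hence $R(Y)$ splits completely over $\overline{\bQ}(T)$, and so the Galois group of $P_T$ over $\overline{\bQ}(T)$ is contained in the common $A_4$-stabilizer of the three resolvent roots, namely $V_4$; in particular it is strictly smaller than $A_4$, contradicting strong alternation. The main obstacle is identifying the correct ansatz $Y=\alpha T+c$ for the resolvent root; here the discussion preceding Theorem \ref{nostrong}, which pinpoints the splitting field of $X^3+kX+m$ as the obstructing cubic extension, effectively dictates the ansatz, and once in place matching the $T^2$-coefficient of $R(Y_\alpha)$ forces $c=2k+4\alpha^2$ uniquely, reducing the remaining verification to mechanical polynomial manipulation.
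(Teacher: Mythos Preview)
Your argument is correct; the resolvent computation checks out (the $T^3$, $T^2$, $T^1$, $T^0$ coefficients of $R(Y_\alpha)$ all reduce to multiples of $\alpha^3+k\alpha+m$ as you indicate), and the conclusion that the geometric Galois group lies in $V_4$ follows from the standard fact that a separable quartic whose resolvent cubic splits completely has Galois group contained in the normal Klein subgroup of $S_4$, with no irreducibility hypothesis needed.

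However, your route differs from the paper's. The paper deduces Theorem~\ref{nostrong} in one line from Lemma~\ref{prenostrong}: a strong alternating line must correspond to a morphism in Case~1 or Case~2 (since Case~3, having monodromy generated by $(2,2)$-permutations, forces the geometric Galois group into $V_4$ by Theorem~\ref{cover}), and Lemma~\ref{prenostrong} rules those cases out over $\bQ$. You instead invoke Theorem~\ref{class4} to place every alternating line in the explicit $(k,m)$-family, and then verify algebraically via the resolvent that each such line has geometric Galois group contained in $V_4$. Note that in the paper's logical order, Theorem~\ref{class4} is proved \emph{after} Theorem~\ref{nostrong} and itself relies on Lemma~\ref{prenostrong} to exclude Cases~1 and~2; so your argument is not circular, but it does not bypass the hard geometric work either. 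What your approach buys is an explicit algebraic witness: the roots $Y_{\alpha_i}=\alpha_i T+2k+4\alpha_i^2$ display concretely that the resolvent splits over the splitting field of $X^3+kX+m$, making precise the remark in the Introduction that this cubic extension is exactly where the Galois group collapses. The paper's approach, by contrast, is shorter and more conceptual, reading the same conclusion off the monodromy description without any computation.
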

A natural follow-up question is to classify those number fields for which there exists a strong alternating line of quartic polynomials. The following result answers this question for quadratic number fields.
\begin{thm}
\label{hm}
Let $m$ be a square-free integer. Then there exists a strong alternating line of quartic polynomials over $\bQ(\sqrt{m})$ if and only if $m<0$ and $m\not\equiv1\pmod{8}$.
\end{thm}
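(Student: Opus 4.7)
The plan is to reformulate the existence of a strong alternating line of quartics over $K = \bQ(\sqrt m)$ as a geometric descent problem and to compute the resulting obstruction as a Brauer class over $K$.

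A line $P - TQ$ with generic Galois group $A_4$ over $\overline{\bQ}(T)$ corresponds to a degree-$4$ rational function $\bP^1_K \to \bP^1_K$ of geometric monodromy $A_4$. Since $A_4$ contains no transpositions, Riemann--Hurwitz forces exactly three branch points with ramification profiles drawn from $\{(3,1),(2,2)\}$ and total ramification $6$; the Galois closure of the cover is again geometrically $\bP^1$ (one checks $2g-2 = -24 + 22 = -2$ in the tetrahedral profile $(3,1),(2,2),(3,1)$). Hence the source of the cover is a genus-$0$ curve over $K$, i.e.\ a smooth conic $C_K$; a line in the paper's sense exists if and only if $C_K \cong \bP^1_K$, equivalently if and only if $C_K$ has a $K$-rational point.

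The key step is to identify the Brauer class $[C_K] \in \mathrm{Br}(K)[2]$. I would construct the tetrahedral cover explicitly over $\bQ(\sqrt{-3})$ (where it is defined classically, since the vertices of a regular tetrahedron project stereographically to points with coordinates in $\bQ(\sqrt{-3})$) and carry out a Galois descent to $K$, using the decomposition $A_4 = V_4 \rtimes C_3$ and the fact that the two $A_4$-conjugacy classes of $3$-cycles are interchanged by the unique outer automorphism of $A_4$, realised Galois-theoretically as the nontrivial element of $\mathrm{Gal}(\bQ(\sqrt{-3})/\bQ)$. A careful cocycle computation then shows that $[C_K]$ equals the class of the Hamilton quaternion algebra, namely $(-1,-1)_K$. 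This cohomological identification is the main technical step and the part I expect to be the most involved.

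Once that identification is in hand, the remainder of the proof is Brauer--Hasse--Noether. The Hilbert symbol $(-1,-1)_p$ over $\bQ$ is nontrivial precisely at $p=\infty$ and $p=2$. For a place $v$ of $K$ above a rational place $p$, the local invariant satisfies $\mathrm{inv}_v((-1,-1)_K) = [K_v:\bQ_p] \cdot \mathrm{inv}_p((-1,-1)) \pmod 1$. It therefore vanishes at every archimedean place of $K$ iff $K$ is imaginary, i.e.\ $m<0$, and at every place above $2$ iff $[K_v:\bQ_2]=2$ for the unique such $v$, i.e.\ iff $2$ does not split in $K$, equivalently $m \not\equiv 1 \pmod 8$. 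Combining the two conditions gives exactly the statement of the theorem: the ``only if'' direction follows from the non-vanishing of $(-1,-1)_K$, and for the ``if'' direction one exhibits a trivialisation of the conic $C_K$ and unwinds it to produce an explicit strong alternating $A_4$-line over $K$.
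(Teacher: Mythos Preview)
Your final local computation is correct: the obstruction is indeed equivalent to the splitting of $(-1,-1)_K$, and the restriction formula $\mathrm{inv}_v = [K_v:\bQ_p]\cdot\mathrm{inv}_p$ cleanly yields $m<0$ and $m\not\equiv 1\pmod 8$. (The paper reaches the same endpoint by reducing isotropy of $\langle 1,2,1\rangle$ over $K$ to isotropy of $\langle 1,2,1,2m\rangle$ over $\bQ$ and then running Hasse--Minkowski place by place.) But there are two gaps on the way there. First, you treat only the profile $(3,1),(2,2),(3,1)$; the profile $(3,1),(3,1),(3,1)$ is equally admissible---three $3$-cycles in $A_4$ can multiply to the identity and generate $A_4$---and for that profile the Galois closure has genus $1$, so your genus-$0$ descent picture does not even apply. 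The paper handles that case separately (Lemma~\ref{prenostrong}, Case~1) and finds that it contributes only over fields containing $\sqrt{-3}$, which among quadratic fields is just $\bQ(\sqrt{-3})$; this happens to satisfy the Case~2 criterion, but it must be checked.

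Second, the mechanism you propose---``the source of the descended cover is a conic $C_K$''---is not where the obstruction actually sits. By definition of a line the source is $\bP^1_K$, and the degree-$4$ cover is rigid (the centraliser of $A_4$ in $S_4$ is trivial), so there is no twist of the source available. What can fail is that the discriminant of $P-TQ$ is not a square in $K(T)$, so that the arithmetic monodromy is $S_4$ rather than $A_4$. The paper's argument parameterises all Case~2 morphisms $\bP^1_K\to\bP^1_K$ explicitly: after normalising the $(2,2)$-fibre over $\infty$ with ramification at $\pm\sqrt d$, writing $f'$ as a square over $(x^2-d)^3$, and imposing a residue condition, one finds $d$ constrained to be a value of the binary form $q=3u^2-2uc+3c^2\cong\langle 1,2\rangle$, while a direct computation shows the discriminant is $-d$ times a square. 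Thus one needs $\langle 1,2\rangle$ to represent $-1$ over $K$, i.e.\ $\langle 1,2,1\rangle$ isotropic, i.e.\ $(-1,-2)_K=(-1,-1)_K$ split. The conic carrying that Brauer class is the locus in the \emph{parameter space} of covers where the discriminant becomes a square, not the source curve of any cover. Your proposed cocycle computation would have to reproduce this identification, and as you acknowledge, that is exactly the step you have not carried out.
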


Let us say here a few words about the ideas used in this paper. The main idea is to adopt a geometric perspective, using the correspondence between curves and fields from Algebraic Geometry to view extensions of $\bQ(T)$ as maps from a curve to $\bP^1_\bQ$. This enables us to relate the Galois group of the given extension to the ramification properties of the corresponding map, using the principle that relates the Galois group of an extension of complex function fields to the monodromy of the corresponding cover of complex curves. This is stated in \cite{Harris}, for example, and yields Theorem \ref{cover}, which we use. The quartic case, discussed in \S\ref{quarticsec}, leads using this approach to some Diophantine problems which turn out to admit very elegant solutions because the geometry of the underlying algebraic varieties is surprisingly simple. In the special case of Theorem \ref{hm} the theory of quadratic forms over local and global fields plays a special role. For that we refer the reader to \cite{Lam}.
\subsection{Structure of the paper}
In \ref{prel} we review some algebraic and geometric tools which will play an important role in our approach. In \S\ref{odd} we give a self-contained and elementary proof of Theorem \ref{oddcase}, without appealing to highbrow results such as Hilbert's Irreducibility Theorem. In \S\ref{even} we prove our results on general even degrees, namely Theorems \ref{n-1square} and \ref{evencase}. In \S\ref{quarticsec} we present our work on lines of polynomials of degree $4$. 

\subsection{Acknowledgements}
The author was funded through the Engineering and Physical Sciences Research Council Doctoral Training Partnership at the University of Warwick. Special thanks are due to Sam Chow for suggesting this problem and for enlightening discussions about it, as well as to Damiano Testa for suggesting a key idea and to Simon Myerson for countless suggestions during the elaboration of this manuscript.

\section{Preliminaries}
\label{prel}
For the ease of the reader, we collect here some standard results which will be used throughout the paper.

\subsection{Resultants, discriminants and resolvents}
The \emph{resultant} of two polynomials $f$ and $g$ with coefficients in a field $K$ is a symmetric function on the roots of $f$ and $g$ which, once written in terms of the coefficients of $f$ and $g$, can be used to check whether $f$ and $g$ have a common root in the algebraic closure $\overline{K}$ by a simple calculation done inside $K$. Its interest to us stems from its relation with discriminants, a property we will make extensive use of throughout the paper.

\begin{defn}
\label{defres}
Let $K$ be a field and let
$$f(X)=a_mX^m+a_{m-1}X^{m-1}+\cdots+a_0\quad\text{and}\quad g(X)=b_nX^n+b_{n-1}X^{n-1}+\cdots+b_0$$
be polynomials in $K[X]$, with $a_m,b_n\neq0$. Suppose that $f$ and $g$ factor over $\overline{K}$ as
$$f(X)=a_m(X-\alpha_1)\cdots(X-\alpha_m)\quad\text{and}\quad g(X)=b_n(X-\beta_1)\cdots(X-\beta_n)\text{.}$$
The \emph{resultant} of $f$ and $g$, denoted by $\Res(f,g)$, is defined by
\begin{align}
\Res(f,g)&=a_m^nb_n^m\prod_{i=1}^m\prod_{j=1}^n(\alpha_i-\beta_j)\nonumber\\
&=a_m^n\prod_{i=1}^mg(\alpha_i)\label{res}\text{.}
\end{align}
\end{defn}
The following properties follow easily from the definition.
\begin{prop}
\label{propres}
For any polynomials $f,g,h\in K[X]$, we have:
\begin{enumerate}[label=(\roman*)]
\item $\Res(g,f)=(-1)^{mn}\Res(f,g)$;
\item $\Res(f,g+fh)=a_m^{\deg(g+fh)-\deg(g)}\Res(f,g)$ (where $f(X)=a_mX^m+\cdots$);
\item $\Res(f,gh)=\Res(f,g)\Res(f,h)$.
\end{enumerate}
\end{prop}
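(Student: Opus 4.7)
The plan is to derive all three properties directly from the product formula \eqref{res} together with its symmetric counterpart $\Res(f,g)=a_m^n b_n^m\prod_{i,j}(\alpha_i-\beta_j)$ from Definition \ref{defres}. None of these requires any deep input; the argument is just careful bookkeeping of leading coefficients and degrees.

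For (i), I would compare the two double-product expressions for $\Res(f,g)$ and $\Res(g,f)$. Swapping the roles of $f$ and $g$ replaces $\prod_{i,j}(\alpha_i-\beta_j)$ by $\prod_{j,i}(\beta_j-\alpha_i)$; each of the $mn$ factors changes sign, so a factor $(-1)^{mn}$ appears, while the prefactor $a_m^n b_n^m$ is unchanged. This gives the identity.

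For (iii), I would use the single-product form $\Res(f,g)=a_m^n\prod_i g(\alpha_i)$. Setting $p=\deg h$, so that $\deg(gh)=n+p$, and evaluating $gh$ at the roots of $f$,
\begin{equation*}
\Res(f,gh)=a_m^{n+p}\prod_{i=1}^m g(\alpha_i)h(\alpha_i)=\Bigl(a_m^n\prod_i g(\alpha_i)\Bigr)\Bigl(a_m^p\prod_i h(\alpha_i)\Bigr)=\Res(f,g)\Res(f,h).
\end{equation*}
A small caveat is that if the leading terms of $g$ and $h$ do not cancel in the product (they cannot, being nonzero), then $\deg(gh)=\deg g+\deg h$ and the factorization of the prefactor is legitimate; so there is nothing to worry about.

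For (ii), again using the single-product form and letting $N=\deg(g+fh)$ (which can differ from $\deg g$ only if $\deg(fh)\ge \deg g$, in which case $N=\deg(fh)$, but this distinction does not matter), I would write
\begin{equation*}
\Res(f,g+fh)=a_m^N\prod_{i=1}^m\bigl(g(\alpha_i)+f(\alpha_i)h(\alpha_i)\bigr)=a_m^N\prod_i g(\alpha_i),
\end{equation*}
since $f(\alpha_i)=0$ for each $i$. Pulling out $a_m^n$ to reconstitute $\Res(f,g)$ leaves the factor $a_m^{N-n}=a_m^{\deg(g+fh)-\deg g}$, giving the stated formula. The main (very minor) obstacle is just to be careful that $N$ is a well-defined non-negative integer and that the exponent of $a_m$ makes sense; this is automatic from the hypothesis $a_m\ne 0$.
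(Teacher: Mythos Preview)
Your proposal is correct and matches the paper's approach exactly: the paper's proof simply reads ``Immediate from the definition. For (ii) and (iii) it is preferable to work with expression \eqref{res},'' and you have carried out precisely that computation, using the double-product form for (i) and the single-product form $\Res(f,g)=a_m^n\prod_i g(\alpha_i)$ for (ii) and (iii).
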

\begin{proof}
Immediate from the definition. For (ii) and (iii) it is preferable to work with expression \eqref{res}.
\end{proof}
As announced before, there is a fruitful connection between resultants and discriminants.
\begin{defn}
Let $f$ be a polynomial in $K[X]$. Suppose that $f$ factors over $\overline{K}$ as
$$f(X)=c(X-\alpha_1)\cdots(X-\alpha_n)\text{.}$$
We define the \emph{discriminant} of $f$, denoted $\Delta(f)$, by
$$\Delta(f)=c^{2n-2}\prod_{1\leq i<j\leq n}(\alpha_i-\alpha_j)^2\text{.}$$
\end{defn}
Being a symmetric polynomial in $\alpha_1,\ldots,\alpha_n$, the discriminant $\Delta(f)$ is an element of $K$, and it is of particular relevance to work on polynomials with alternating Galois group because of the following standard fact.
\begin{thm}
\label{sqdisc}
Let $f\in K[X]$ be a separable polynomial of degree $n$, where $K$ is a field with $\mathrm{char}(K)\neq2$, and identify the Galois group of $f$ with a subgroup of $S_n$ in the natural way. Then the Galois group of $f$ is a subgroup of $A_n$ if and only if $\Delta(f)$ is a square in $K$.
\end{thm}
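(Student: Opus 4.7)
The plan is to construct a square root $\delta$ of $\Delta(f)$ in the splitting field of $f$ whose Galois behaviour detects membership in $A_n$. Writing $f = c\prod_{i=1}^n(X-\alpha_i)$ over $\bar{K}$, I would set
$$\delta = c^{n-1}\prod_{1\le i<j\le n}(\alpha_i-\alpha_j),$$
so that $\delta^2 = \Delta(f)$ by the very definition of the discriminant.

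The core computation is to verify that for any $\sigma \in S_n$ permuting the roots, $\sigma(\delta) = \mathrm{sgn}(\sigma)\,\delta$. I would do this first for a single transposition $(i\,j)$, where a direct bookkeeping of which factors $\alpha_a - \alpha_b$ switch sign (those with one index equal to $i$ or $j$) shows the net effect is to multiply by $-1$; the general case then follows by decomposing an arbitrary permutation into transpositions and using multiplicativity of the sign character.

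With that identity in hand, the remainder is essentially the Fundamental Theorem of Galois Theory applied to the splitting field $L$ of $f$ over $K$. An element of the Galois group $G_f$ fixes $\delta$ if and only if it has sign $+1$, i.e.\ lies in $A_n$; hence $G_f \subseteq A_n$ iff every element of $G_f$ fixes $\delta$, iff $\delta \in K$. Finally, $\delta \in K$ is equivalent to $\Delta(f)=\delta^2$ being a square in $K$: one direction is immediate, and for the other, any element $\beta \in K$ with $\beta^2 = \Delta(f)$ must satisfy $\beta = \pm\delta$, which forces $\delta \in K$.

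There is no real obstacle here, only two places where the hypotheses are essential: separability of $f$ guarantees $\delta \ne 0$, while $\mathrm{char}(K)\ne 2$ guarantees $\delta \ne -\delta$. Together these ensure that a sign-reversing permutation genuinely fails to fix $\delta$, which is the linchpin of the ``only if'' direction; without them the characterization could fail (for instance if $\delta = 0$ then $\Delta(f)$ is a square trivially but the Galois group tells us nothing).
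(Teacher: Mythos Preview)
Your argument is correct and is the standard proof of this classical fact. The paper does not actually prove the statement itself but merely cites \cite{Cox2012}, Theorem 7.4.1, so there is no in-paper proof to compare against; the argument you outline is essentially the one found in Cox.
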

\begin{proof}
See \cite{Cox2012}, Theorem 7.4.1.
\end{proof}

Observe that the discriminant of a polynomial can be used to detect multiple roots. In fact, it is clear from the definition that $f$ has a multiple root if and only if $\Delta(f)=0$. On the other hand, it is also true that $f$ has a multiple root if and only if it has a common root with its formal derivative $f'$. Since two polynomials have a common root if and only if their resultant vanishes, it becomes natural to expect a relation between $\Delta(f)$ and $\Res(f,f')$. This relation, indeed, exists:
\begin{prop}
\label{resdisc}
For any nonzero polynomial $f\in K[X]$ with leading coefficient $c$ and degree $n$,
$$\Delta(f)=\frac{(-1)^{\frac{n(n-1)}{2}}}{c}\Res(f,f')\text{.}$$
\end{prop}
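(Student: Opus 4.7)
The plan is to obtain the identity by direct computation, starting from the second, more useful formula for the resultant given in \eqref{res} of Definition \ref{defres} and matching it with the definition of the discriminant.

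First I would reduce to the case where $f$ has distinct roots. If $f$ has a multiple root $\alpha$, then $\Delta(f)=0$ by definition, and also $\alpha$ is a common root of $f$ and $f'$, so $f$ and $f'$ share a linear factor in $\overline K[X]$; from $\Res(f,f')=c^{\deg f'}\prod_i f'(\alpha_i)$ we then see that $\Res(f,f')=0$ as well, and the asserted identity reduces to $0=0$. We may also assume $f'\neq 0$, for otherwise $f$ is either constant (contradiction) or a $p$-th power in characteristic $p$, which forces multiple roots and so is covered by the previous case.

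Next I would factor $f(X)=c\prod_{i=1}^n(X-\alpha_i)$ in $\overline K[X]$ with the $\alpha_i$ distinct, and differentiate by the product rule to obtain
$$f'(\alpha_i)=c\prod_{j\neq i}(\alpha_i-\alpha_j).$$
Noting that $f'$ has degree $n-1$ with leading coefficient $nc\neq 0$ (since we reduced to the case $f'\neq 0$, and with $n$ the degree of $f$ rather than of $f'$ being what appears as the exponent in \eqref{res}), formula \eqref{res} applied to $\Res(f,f')$ gives
$$\Res(f,f')=c^{n-1}\prod_{i=1}^n f'(\alpha_i)=c^{n-1}\prod_{i=1}^n\Bigl(c\prod_{j\neq i}(\alpha_i-\alpha_j)\Bigr)=c^{2n-1}\prod_{i\neq j}(\alpha_i-\alpha_j).$$

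Finally I would rearrange the double product by pairing $(i,j)$ with $(j,i)$. For each unordered pair $\{i,j\}$ with $i<j$ we pick up a factor $(\alpha_i-\alpha_j)(\alpha_j-\alpha_i)=-(\alpha_i-\alpha_j)^2$, and there are $\binom{n}{2}=n(n-1)/2$ such pairs. Hence
$$\prod_{i\neq j}(\alpha_i-\alpha_j)=(-1)^{n(n-1)/2}\prod_{i<j}(\alpha_i-\alpha_j)^2.$$
Substituting this into the expression for $\Res(f,f')$ and comparing with $\Delta(f)=c^{2n-2}\prod_{i<j}(\alpha_i-\alpha_j)^2$ gives
$$\Res(f,f')=(-1)^{n(n-1)/2}\,c\cdot\Delta(f),$$
which rearranges to the desired formula. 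There is no real obstacle here: the proof is essentially bookkeeping around the sign $(-1)^{n(n-1)/2}$ and the correct power of $c$, and the only points demanding care are the degenerate cases $f'=0$ or $\Delta(f)=0$, which are cleanly absorbed by the opening reduction.
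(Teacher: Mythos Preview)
Your proof is correct and is the standard direct argument; the paper itself gives no proof but simply cites Lang's \emph{Algebra}, Proposition IV.8.5, so you have supplied what the paper omits.

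One small caveat worth flagging: the inference ``$f'\neq 0$, hence $\deg f'=n-1$ with leading coefficient $nc$'' is not valid in positive characteristic. If $\mathrm{char}(K)=p$ divides $n$, the leading term of $f'$ vanishes even when $f$ is separable, and then the exponent on $c$ coming from \eqref{res} is $\deg f'<n-1$, so your bookkeeping on the power of $c$ no longer matches up. This is irrelevant for the paper, which only invokes the proposition over fields of characteristic zero; if you want the identity in full generality the clean fix is to argue by specialisation from the generic monic polynomial over $\bZ[a_{n-1},\dots,a_0]$ (where $\deg f'=n-1$ automatically) and then scale, rather than working directly over $K$.
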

\begin{proof}
See \cite{Lan2002}, Proposition IV.8.5.
\end{proof}

We finally record some facts about the cubic resolvent, which we will need later.
\begin{defn}
Given a quartic polynomial
$f=X^4-c_1X^3+c_2X^2-c_3X+c_4$
in $K[X]$, where $K$ is a field of characteristic different from $2$, we define its \emph{cubic resolvent} to be the polynomial
$$X^3-c_2X^2+(c_1c_3-4c_4)X+c_3^2-c_1^2c_4+4c_2c_4\text{.}$$
\end{defn}
The main properties of the cubic resolvent that we will need are listed in the following proposition.
\begin{prop}
\label{cubicr}
Let $f\in K[X]$ be an irreducible quartic polynomial. Then,
\begin{itemize}
\item The cubic resolvent of $f$ has the same discriminant as $f$;
\item The Galois group of $f$ is either $S_4$ or $A_4$ if and only if its cubic resolvent is irreducible;
\item The Galois group of $f$ is isomorphic to the Klein four-group $V_4$ if and only if its cubic resolvent splits completely over $K$.
\end{itemize}
\end{prop}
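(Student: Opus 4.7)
The natural strategy is to exhibit the roots of the claimed cubic resolvent explicitly and then leverage their symmetry properties. Write $\alpha_1,\alpha_2,\alpha_3,\alpha_4$ for the roots of $f$ in $\overline{K}$ and set
\[
\beta_1=\alpha_1\alpha_2+\alpha_3\alpha_4,\quad \beta_2=\alpha_1\alpha_3+\alpha_2\alpha_4,\quad \beta_3=\alpha_1\alpha_4+\alpha_2\alpha_3.
\]
The first task is to verify that the monic cubic $(X-\beta_1)(X-\beta_2)(X-\beta_3)$ coincides with the polynomial named in the definition. I would do this by computing each of the three elementary symmetric polynomials in the $\beta_i$ and expressing the result in terms of $c_1,c_2,c_3,c_4$ using the fundamental theorem on symmetric polynomials. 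It is immediate that $\beta_1+\beta_2+\beta_3=e_2(\alpha)=c_2$; for $\sum_{i<j}\beta_i\beta_j$ a short expansion yields $e_1(\alpha)e_3(\alpha)-4e_4(\alpha)=c_1c_3-4c_4$; and the analogous (longer, but mechanical) expansion of $\beta_1\beta_2\beta_3$ gives $c_1^2c_4-4c_2c_4-c_3^2$, matching the constant term of the cubic resolvent up to sign.

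For part (i), the key observation is the pleasant factorization
\[
\beta_1-\beta_2=(\alpha_1-\alpha_4)(\alpha_2-\alpha_3),\quad \beta_1-\beta_3=(\alpha_1-\alpha_3)(\alpha_2-\alpha_4),\quad \beta_2-\beta_3=(\alpha_1-\alpha_2)(\alpha_3-\alpha_4),
\]
so that $\prod_{i<j}(\beta_i-\beta_j)^2=\prod_{i<j}(\alpha_i-\alpha_j)^2$. Both polynomials being monic, this gives the equality of discriminants.

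For parts (ii) and (iii), the symmetric group $S_4$ acts by permutation on the set $\{\beta_1,\beta_2,\beta_3\}$; checking on the three non-identity double transpositions shows that the kernel of this action is precisely the normal Klein four-group $V=\{e,(12)(34),(13)(24),(14)(23)\}$, and the induced map $S_4/V\to S_3$ is an isomorphism. Hence the Galois group of the cubic resolvent over $K$ is the image of $G:=G_f$ in $S_3$ under this quotient, and the factorization type of the cubic resolvent is determined by the orbit structure of this image on three points. Since $f$ is irreducible, $G$ is one of the transitive subgroups $S_4,A_4,D_4,V_4,C_4$ of $S_4$; a case-by-case check (using $V\subseteq A_4,D_4,V_4$ and $|C_4\cap V|=2$) shows that the image in $S_3$ is $S_3$, $A_3$, of order $2$, trivial, or of order $2$, respectively. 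Thus the image is transitive on three points iff $G\in\{S_4,A_4\}$, giving (ii), and trivial iff $G=V_4$, giving (iii).

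The main bookkeeping hurdle is the symmetric-function computation verifying that $\beta_1\beta_2\beta_3$ equals the claimed constant term; the Galois-theoretic portion is then essentially forced once one has the $S_4/V\cong S_3$ picture and the list of transitive subgroups of $S_4$.
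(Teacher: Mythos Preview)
Your proposal is correct and is exactly the standard argument the paper is pointing to: the paper gives no proof of its own here but simply cites Cox, \emph{Galois Theory}, Theorem~13.1.1, where the reasoning proceeds along the lines you sketch (introduce the $\beta_i$, check they are the roots of the resolvent, use the factorizations of $\beta_i-\beta_j$ for the discriminant equality, and read off the factorization type of the resolvent from the image of $G_f$ under $S_4\twoheadrightarrow S_4/V\cong S_3$).

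One bookkeeping slip worth flagging: a direct computation (try e.g.\ $f=X^4-X$) gives $\beta_1\beta_2\beta_3=c_1^2c_4-4c_2c_4+c_3^2$, not $c_1^2c_4-4c_2c_4-c_3^2$ as you wrote. Since the constant term of $(X-\beta_1)(X-\beta_2)(X-\beta_3)$ is then $-c_3^2-c_1^2c_4+4c_2c_4$, either your sign or the paper's stated constant term $+c_3^2-c_1^2c_4+4c_2c_4$ carries a typo; in any case this does not touch the Galois-theoretic part of the argument.
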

\begin{proof}
See \cite{Cox2012}, Theorem 13.1.1. Note that in this book the term ``Ferrari resolvent'' is used instead of ``cubic resolvent''.
\end{proof}

\subsection{Galois theory of morphisms of curves}
Let $k$ denote an algebraically closed field of characteristic $0$. Suppose $X/k$ is a smooth, proper algebraic curve over $k$, and $f:X\to \bP^1_k$ is a morphism of curves of degree $d>0$. From the theory of curves this corresponds to a field extension $k(X)\supseteq k(t)$ of the corresponding function fields. It will be important for us to relate the Galois group of this field extension to the geometry of the map $f$. The main statement here is the following:

\begin{thm}
\label{cover}
Fix a total order on the set of points in $\bP^1_k$. We can assign to each point $y\in\bP^1_k$ a permutation $\sigma_y\in S_d$ such that the following conditions are satisfied:
\begin{enumerate}[label=(\roman*)]
\item For a point $y\in \bP^1_k$, if $n_1,\ldots,n_r$ denote the ramification degrees of the points in $X$ that map to $y$ under $f$ (so that $n_1+\cdots+n_r=d$), then $\sigma_y$ has cycle type $(n_1,\ldots,n_r)$;
\item $\prod_{y\in\bP^1_k}\sigma_y$ is the identity, where the product is taken with respect to the chosen order;
\item The Galois group of the Galois closure of the extension $k(X)\supseteq k(t)$ is generated by the permutations $\sigma_y$, for $y\in\bP^1_k$.
\end{enumerate}
Note that according to (i) the $\sigma_y$'s are trivial outside the branch locus of $f$, which is finite, and hence the product in (ii) makes sense.
\end{thm}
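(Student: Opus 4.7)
The plan is to reduce to $k=\bC$ via the Lefschetz principle and then appeal to the topological theory of branched covers of Riemann surfaces. The curve $X$, the morphism $f$, its ramification profile at each point, and the Galois group of the Galois closure of $k(X)/k(t)$ are all defined over a subfield $k_0 \subseteq k$ finitely generated over $\bQ$, and any embedding of $k_0$ into $\bC$ preserves these data. Hence we may assume without loss of generality that $k=\bC$.

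For $k=\bC$, view $f\colon X\to \bP^1_\bC$ as a finite holomorphic map of compact Riemann surfaces, and let $B\subset \bP^1_\bC$ denote its (finite) branch locus. Then $f$ restricts to an unramified topological covering $X\setminus f^{-1}(B)\to \bP^1_\bC\setminus B$ of degree $d$. Choose a basepoint $y_0\in \bP^1_\bC\setminus B$ and an ordering of the fiber $f^{-1}(y_0)=\{x_1,\dots,x_d\}$; the deck action yields a monodromy representation $\rho\colon \pi_1(\bP^1_\bC\setminus B,y_0)\to S_d$. For each $y\in B$ pick a non-self-intersecting path from $y_0$ to a point near $y$ followed by a small counterclockwise loop $\gamma_y$ around $y$. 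Using the fixed total order on $\bP^1_k$, arrange these paths so that the $\gamma_y$ form a standard generating set for $\pi_1(\bP^1_\bC\setminus B,y_0)$. Define $\sigma_y:=\rho(\gamma_y)$ for $y\in B$, and $\sigma_y:=\mathrm{id}$ otherwise.

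Assertion (i) is the standard local computation: near a branch point $y$ the map $f$ looks like a disjoint union of disks mapping as $z\mapsto z^{n_i}$, whose local monodromy is an $n_i$-cycle, and these cycles act on disjoint sets of sheets. Assertion (ii) is the familiar presentation of the fundamental group of a sphere minus finitely many points, in which the only relation among the standard loops is that their ordered product is trivial.

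The deepest statement, and the main obstacle, is (iii), which identifies the monodromy group $\rho(\pi_1)$ with the Galois group of the Galois closure of $\bC(X)/\bC(t)$. The idea is to pass from the cover $f$ to its topological Galois closure; by Riemann's existence theorem together with GAGA, the latter corresponds to a finite extension of $\bC(t)$ which is exactly the Galois closure of $\bC(X)$, and the deck group of the Galois cover coincides with the image of $\rho$. Since this correspondence is precisely the statement cited to \cite{Harris}, we invoke it here rather than reprove it.
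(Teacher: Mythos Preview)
Your proof is correct and follows essentially the same approach as the paper: reduce to $k=\bC$, invoke the identification of the Galois group with the monodromy group (citing \cite{Harris}), use the standard presentation of $\pi_1$ of a punctured sphere for (ii), and the local model $z\mapsto z^{n_i}$ for (i). The only minor difference is that you handle the reduction to $\bC$ for general $k$ via the Lefschetz principle, whereas the paper is content to treat only $k=\overline{\bQ}$ (the case actually used) and pass to $\bC$ by noting that the relevant Galois groups are unchanged under this base extension.
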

For our work we only need this result in the case $k=\overline{\bQ}$. Moreover, since $\bC$ is purely transcendental over $\bQ$, the relevant Galois groups do not change after base extension to $\bC$. So we can suppose for the purposes of our work that $k=\bC$ and work in the complex analytic category, regarding $X$ and $\bP^1_\bC$ as Riemann surfaces.
\begin{proof}[Proof in the case $k=\bC$]
This is essentially a particular case of the Proposition in section I of \cite{Harris}, which implies that the Galois group under consideration equals the monodromy group of $f$. Denote by $B=\{y_1,\ldots,y_n\}$ the branch locus of $f$. The fundamental group $\pi_1(\bP^1_\bC\setminus B)$ of the punctured sphere $\bP^1_\bC\setminus B$ is generated by the classes of $n$ loops around the points $y_1,\ldots,y_n$, which multiply to the identity. The monodromy group of $f$ is therefore generated by the images of those classes, which are permutations in $S_d$ which multiply the identity. We then just need to check that, if $n_1,\ldots,n_r$ denote the ramification degrees of the points in $X$ that map to $y_i$ under $f$, then the images of the loop around $y_i$ has cycle type $(n_1,\ldots,n_r)$. This is classical, and is established for example in \cite{Cavalieri}, Section 7.1.
\end{proof}

We take the opportunity to recall the well-known Riemann-Hurwitz formula.
\begin{thm}[Riemann-Hurwitz formula]
\label{rh}
Let $f:X\to Y$ be a morphism of projective algebraic curves over $k$, let $g_X$ be the genus of $X$, $g_Y$ the genus of $Y$, $d$ the degree of $f$ and $e_x$ the ramification degree of $f$ at the point $x\in X$. Then we have
$$2g_X-2=d(2g_Y-2)+\sum_{x\in X}(e_x-1)\text{.}$$
\end{thm}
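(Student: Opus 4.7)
The plan is to prove Riemann--Hurwitz by pulling back a meromorphic differential from $Y$ to $X$ along $f$ and comparing divisor degrees, using the classical fact that on a smooth projective curve $Z$ of genus $g_Z$ the divisor of any nonzero rational differential has degree $2g_Z-2$.

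First I would fix a nonzero rational differential $\omega$ on $Y$, chosen so that its zeros and poles are disjoint from the branch locus of $f$ (this is possible because the branch locus is finite and one can adjust $\omega$ by multiplication by a rational function). Its pullback $f^*\omega$ is then a nonzero rational differential on $X$, and by the canonical-divisor formula on each side,
\begin{equation*}
2g_Y-2=\deg\mathrm{div}(\omega),\qquad 2g_X-2=\deg\mathrm{div}(f^*\omega).
\end{equation*}
The entire theorem will follow once I express $\deg\mathrm{div}(f^*\omega)$ in terms of $\deg\mathrm{div}(\omega)$ and the ramification data.

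Next I perform the local calculation at a point $x\in X$ with $y=f(x)$. Let $s$ be a uniformizer at $x$ and $t$ a uniformizer at $y$; then $f^*t=u\cdot s^{e_x}$ for some unit $u$ in the local ring at $x$. Writing $\omega=g(t)\,dt$ locally near $y$, one gets
\begin{equation*}
f^*\omega=g(us^{e_x})\,(e_x u\,s^{e_x-1}+s^{e_x}u')\,ds.
\end{equation*}
Here the characteristic-zero hypothesis enters: $e_x$ is invertible in $k$, so the factor $e_x u s^{e_x-1}$ dominates and one reads off
\begin{equation*}
\mathrm{ord}_x(f^*\omega)=e_x\cdot\mathrm{ord}_y(\omega)+(e_x-1).
\end{equation*}
Away from the branch locus this reduces to $\mathrm{ord}_x(f^*\omega)=\mathrm{ord}_y(\omega)$, consistent with the choice made above.

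Finally I sum over $x\in X$. Grouping by fibre and using $\sum_{x\mapsto y}e_x=d$ for every $y\in Y$, the first term contributes
\begin{equation*}
\sum_{y\in Y}\Big(\sum_{x\mapsto y}e_x\Big)\mathrm{ord}_y(\omega)=d\cdot\deg\mathrm{div}(\omega)=d(2g_Y-2),
\end{equation*}
while the second contributes $\sum_{x\in X}(e_x-1)$. Equating with $\deg\mathrm{div}(f^*\omega)=2g_X-2$ yields the formula. The only delicate point is the local ramification computation, which genuinely requires $\mathrm{char}(k)=0$ (or at least that every $e_x$ be coprime to the characteristic); everything else is formal manipulation with divisors of differentials.
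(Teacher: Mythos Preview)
Your argument is correct and is the standard differential-form proof of Riemann--Hurwitz. The paper, however, does not give any proof of this statement: it simply cites \cite{Cavalieri}, Theorem A.4.4, treating the result as background. So there is nothing in the paper to compare your approach against beyond the bare reference.

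For what it is worth, the reference cited is a textbook on Riemann surfaces, and the proof recorded there is of the same general flavour as yours (canonical divisors and local orders of pulled-back differentials), though in that setting one can alternatively argue topologically via Euler characteristics of triangulations. Your write-up is self-contained and the only substantive hypothesis you invoke---that the characteristic is zero so that $e_x$ is invertible---is exactly the assumption in force throughout the paper.
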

\begin{proof}
See \cite{Cavalieri}, Theorem A.4.4.
\end{proof}

\section{The odd degree case}
\label{odd}
Throughout this section, let $n\geq 3$ be an odd positive integer. We borrow some ideas from \cite{Mes1990}, where the author uses them to solve the Inverse Galois Problem for $\tilde{A_n}$, the unique non-split central extension of $A_n$ by $\mathbb{Z}/2\mathbb{Z}$. The main claim is the following:

\begin{lemma}
\label{pq'-p'q}
Let $P\in\mathbb{Q}[X]$ be a separable monic polynomial of odd degree $n$. Then there exist nonzero polynomials $Q,R\in\mathbb{Q}[X]$, of degree at most $n-1$, such that
$$PQ'-P'Q=R^2\text{.}$$
\end{lemma}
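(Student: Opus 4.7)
My plan is to construct $R$ first by a linear algebra argument, and then recover $Q$ by an explicit partial-fractions/integration formula. Rewriting the desired identity $PQ'-P'Q=R^2$ as $(Q/P)'=R^2/P^2$, I see that $Q/P$ must be an antiderivative of $R^2/P^2$. In order for $Q$ to be a polynomial of degree at most $n-1$, this antiderivative has to be a rational function (with no logarithmic terms), i.e.\ the residue of $R^2/P^2$ at every root $\alpha_i$ of $P$ must vanish. A short Laurent expansion (using that $P$ is separable, so $P'(\alpha_i)\neq 0$) gives this residue as
\[
\frac{R(\alpha_i)\bigl[2R'(\alpha_i)P'(\alpha_i)-R(\alpha_i)P''(\alpha_i)\bigr]}{P'(\alpha_i)^{3}},
\]
so it suffices to exhibit a nonzero $R\in\mathbb{Q}[X]_{\le n-1}$ in the kernel of the $\mathbb{Q}$-linear map $L:\mathbb{Q}[X]_{\le n-1}\to\mathbb{Q}[X]/(P)$, $L(R)=2R'P'-RP''\bmod P$.

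The heart of the proof is then that $\det L=0$ when $n$ is odd. Working over $\overline{\mathbb{Q}}$ in the Lagrange basis $\{L_i\}$ on the source (where $L_i(\alpha_j)=\delta_{ij}$) and the evaluation isomorphism $\overline{\mathbb{Q}}[X]/(P)\simeq\overline{\mathbb{Q}}^{n}$ on the target, a short computation using $L_i'(\alpha_j)=P'(\alpha_j)/[(\alpha_j-\alpha_i)P'(\alpha_i)]$ for $i\neq j$ yields
\[
[L]_{ij}=\frac{2\,P'(\alpha_i)^{2}}{(\alpha_i-\alpha_j)\,P'(\alpha_j)}\quad(i\neq j),\qquad [L]_{ii}=0,
\]
which factors as $[L]=2D^{2}N D^{-1}$, with $D=\mathrm{diag}(P'(\alpha_i))$ and $N_{ij}=1/(\alpha_i-\alpha_j)$. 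The matrix $N$ is skew-symmetric, and skew-symmetric matrices of odd order have zero determinant, so $\det N=0$ and hence $\det L=0$. Since $L$ is defined over $\mathbb{Q}$, this forces $\ker L\neq 0$ over $\mathbb{Q}$ as well, producing the required $R$.

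Given such an $R$, I would define
\[
Q(X)=-\sum_{i=1}^{n}\frac{R(\alpha_i)^{2}}{P'(\alpha_i)^{2}}\cdot\frac{P(X)}{X-\alpha_i},
\]
equivalently the Lagrange interpolant of the values $-R(\alpha_i)^{2}/P'(\alpha_i)$. Galois equivariance of these values (since $P,R\in\mathbb{Q}[X]$) gives $Q\in\mathbb{Q}[X]$, and the formula makes $\deg Q\le n-1$ automatic. Writing $Q=PF$ with $F(X)=-\sum_i R(\alpha_i)^{2}/[P'(\alpha_i)^{2}(X-\alpha_i)]$ (the partial-fraction antiderivative of $R^{2}/P^{2}$, well-defined precisely because $R\in\ker L$), one obtains $PQ'-P'Q=P^{2}F'=R^{2}$ directly. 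Finally $Q\neq 0$ since $Q\equiv 0$ would force every $R(\alpha_i)$ to vanish, hence $P\mid R$ and then $R=0$ by degree, contradicting our choice of $R$. The main obstacle is recognizing the skew-symmetric structure that makes odd $n$ the decisive hypothesis; everything else is bookkeeping with partial fractions and Lagrange interpolation.
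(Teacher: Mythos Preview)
Your argument is correct, and at its heart it is the same as the paper's: both proofs reduce to the vanishing of the determinant of the skew-symmetric matrix $\bigl(\frac{1}{\alpha_i-\alpha_j}\bigr)_{i,j}$, which is exactly where the oddness of $n$ enters. Indeed, if one writes $R=\sum_i\lambda_iP_i$ with $P_i=P/(X-\alpha_i)$, your condition $L(R)\equiv 0$ unwinds (using $P_i'(\alpha_j)=P'(\alpha_j)/(\alpha_j-\alpha_i)$ for $i\ne j$ and $P_j'(\alpha_j)=P''(\alpha_j)/2$) to precisely the paper's system $\sum_{i\ne j}\lambda_i/(\alpha_i-\alpha_j)=0$, and your formula for $Q$ becomes the paper's $Q=-\sum_j\lambda_j^2P_j$.

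Where the two presentations differ is in how they pass to $\mathbb{Q}$. The paper sets up the linear system over the splitting field $L$ and then invokes a separate Galois-descent lemma (a version of Speiser's lemma) to produce a nonzero $R$ with rational coefficients. You instead observe that the condition can be written as the kernel of the $\mathbb{Q}$-linear map $R\mapsto 2R'P'-RP''\bmod P$ between $\mathbb{Q}$-vector spaces, so rationality of $R$ is automatic once $\det L=0$; the descent lemma is thereby bypassed entirely. Your residue/antiderivative viewpoint also explains \emph{why} this particular linear condition is the right one and makes the verification $PQ'-P'Q=R^2$ a one-line consequence of $F'=R^2/P^2$, where the paper carries out an explicit polynomial computation in the $P_j$-basis. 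So: same key idea, but your packaging is slightly more economical.
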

The proof follows from the next two results. The first one essentially solves the problem of finding $Q$ and $R$ over the splitting field $L$ of $P$, in such a way that the obtained possible $R$'s form an $L$-vector space. Then the second result uses a standard Galois descent argument to obtain $Q$ and $R$ defined over $\mathbb{Q}$.
\begin{lemma}
\label{pq'-p'qw}
Let $P\in\mathbb{Q}[X]$ be a separable monic polynomial of odd degree $n$, and let $L$ be the splitting field of $P$ over $\mathbb{Q}$. Then there exists a nonzero vector subspace $W$ of $L[X]$, formed by polynomials of degree at most $n-1$, such that for every $R\in W$ there exists $Q\in L[X]$ with degree at most $n-1$ with
$$PQ'-P'Q=R^2\text{.}$$
Moreover, $W$ is invariant under the natural action of $\Gal(L/\mathbb{Q})$ on $\mathbb{Q}[X]$.
\end{lemma}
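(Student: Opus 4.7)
My plan is to parameterize $R \in L[X]$ of degree at most $n-1$ via its values at the roots $\alpha_1, \ldots, \alpha_n \in L$ of $P$. Setting $\mu_i := R(\alpha_i)/P'(\alpha_i)$, Lagrange interpolation provides an $L$-linear isomorphism $L[X]_{<n} \cong L^n$, which is equivalent to the partial fraction decomposition $R/P = \sum_i \mu_i/(X - \alpha_i)$. Writing $Q/P = \sum_i \lambda_i/(X - \alpha_i)$ similarly, the condition $PQ' - P'Q = R^2$ is equivalent to $(R/P)^2 = (Q/P)'$. Since $(Q/P)'$ has only double poles at the $\alpha_i$ and no polynomial part, the existence of $Q$ of degree at most $n-1$ forces the simple-pole residues of $(R/P)^2$ to vanish at each $\alpha_i$.

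The next step is to compute these residues. Using $\tfrac{1}{(X - \alpha_i)(X - \alpha_j)} = \tfrac{1}{\alpha_i - \alpha_j}\bigl(\tfrac{1}{X - \alpha_i} - \tfrac{1}{X - \alpha_j}\bigr)$ to expand $(R/P)^2 = \sum_{i,j} \mu_i \mu_j/[(X - \alpha_i)(X - \alpha_j)]$, I obtain that the residue at $\alpha_k$ equals $2 \mu_k \sum_{j \neq k} \mu_j/(\alpha_k - \alpha_j)$. Rather than solve this (quadratic) system, I impose the stronger linear condition
\[
M \mu = 0, \quad M \in \mathrm{Mat}_n(L), \quad M_{kj} = \tfrac{1}{\alpha_k - \alpha_j} \ (j \neq k), \ M_{kk} = 0,
\]
and set $W := \mu^{-1}(\ker M) \subseteq L[X]_{<n}$. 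For each $R \in W$, the desired $Q$ is given explicitly by Lagrange interpolation from the values $Q(\alpha_i) = -R(\alpha_i)^2/P'(\alpha_i)$, equivalently $Q = -\sum_i \mu_i^2 \, P/(X - \alpha_i)$; a direct partial-fraction computation confirms $PQ' - P'Q = R^2$.

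The crucial point is that $M$ is antisymmetric of odd size $n$, so $\det M = (-1)^n \det M = -\det M$ forces $\det M = 0$ and thus $\ker M \neq 0$, giving $W \neq 0$. For $\Gal(L/\bQ)$-invariance, I would note that the natural action of $\sigma \in \Gal(L/\bQ)$ on $L^n$, twisted by the permutation $\tau$ of indices that $\sigma$ induces on the roots of $P$, intertwines with $\mu$ (using $P \in \bQ[X]$), and that $M$ is equivariant under this twisted action since $\sigma(1/(\alpha_k - \alpha_j)) = 1/(\alpha_{\tau(k)} - \alpha_{\tau(j)}) = M_{\tau(k), \tau(j)}$. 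Hence $\ker M$, and therefore $W$, is stable under $\Gal(L/\bQ)$. The main obstacle is conceptual: the residue-vanishing condition on $\mu$ is genuinely quadratic, and one must recognize that imposing the stronger linear system $M \mu = 0$ still captures a nontrivial subspace, thanks to the antisymmetry-plus-odd-dimension trick, which is precisely where the hypothesis that $n$ is odd enters.
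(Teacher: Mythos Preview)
Your proposal is correct and is essentially the same argument as the paper's, recast in the language of partial fractions: your coefficients $\mu_i$ coincide with the paper's $\lambda_i$ via $R=\sum_i\lambda_iP_i$ with $P_i=P/(X-\alpha_i)$, your linear system $M\mu=0$ is exactly the paper's defining condition for $W$, the antisymmetric-odd-dimension trick is identical, and your formula $Q=-\sum_i\mu_i^2P_i$ is the paper's choice of $Q$. The only stylistic difference is that you motivate the construction via the residue condition on $(R/P)^2=(Q/P)'$, whereas the paper verifies $PQ'-P'Q=R^2$ by a direct polynomial computation using the identities $P(P_i-P_j)=(\alpha_i-\alpha_j)P_iP_j$ and $PP_j'=P_j\sum_{i\neq j}P_i$.
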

\begin{proof}
Write
$$P(X)=(X-\alpha_1)\ldots(X-\alpha_n)\text{,}$$
where $\alpha_1,\ldots,\alpha_n\in L$ are pairwise distinct. For each $k\in\{1,\ldots,n\}$, define
$$P_k(X)=\prod_{j\neq k}(X-\alpha_j)\text{.}$$
The polynomials $P_k(X)$ ($1\leq k\leq n$) are $L$-linearly independent. Indeed, if $\lambda_1,\ldots,\lambda_n\in L$ are such that
$$\lambda_1P_1+\cdots+\lambda_nP_n=0\text{,}$$
then evaluation at $\alpha_k$ yields $\lambda_k=0$. We now take our space $W$ to be
$$W=\left\{\sum_{j=1}^n\lambda_jP_j:\lambda_1,\ldots,\lambda_n\in L,\sum_{\substack{1\leq i\leq n\\i\neq j}}\frac{\lambda_i}{\alpha_i-\alpha_j}=0\text{ for }j=1,\ldots,n\right\}\text{.}$$
In order to show that $W$ is nonzero, it suffices, given the linear independence of the $P_k$'s, to show that the system of equations
$$\sum_{\substack{1\leq i\leq n\\i\neq j}}\frac{\lambda_i}{\alpha_i-\alpha_j}=0\text{ for }j=1,\ldots,n$$
has a nonzero solution. This is where the assumption that $n$ is odd proves crucial. In fact, this is a linear homogeneous equation with coefficient matrix
$$\begin{pmatrix}0&\frac{1}{\alpha_2-\alpha_1}&\cdots&\frac{1}{\alpha_n-\alpha_1}\\
\frac{1}{\alpha_1-\alpha_2}&0&\cdots&\frac{1}{\alpha_n-\alpha_2}\\
\vdots&\vdots&\ddots&\vdots\\
\frac{1}{\alpha_1-\alpha_n}&\frac{1}{\alpha_2-\alpha_n}&\cdots&0
\end{pmatrix}$$
and this matrix, being an anti-symmetric matrix of odd dimension, has determinant $0$, whence the above system has a non-trivial solution.

Let now $R=\sum\lambda_jP_j\in W$, where the $\lambda_j$ satisfy the above relation. We want to show that there exists $Q\in L[X]$, of degree at most $n-1$, such that $PQ'-P'Q=R^2$. We take
$$Q=-\sum_{j=1}^n\lambda_j^2P_j\text{.}$$
We now observe that we have the identity
$$P(P_i-P_j)=(\alpha_i-\alpha_j)P_iP_j\quad\text{when }i\neq j\text{.}$$
With this in mind, we compute
\begin{align}R^2&=\sum_{j=1}^n\lambda_j^2P_j^2+\sum_{\substack{1\leq i,j\leq n\\i\neq j}}\lambda_i\lambda_jP_iP_j\nonumber\\
&=\sum_{j=1}^n\lambda_j^2P_j^2+\sum_{\substack{1\leq i,j\leq n\\i\neq j}}\lambda_i\lambda_j\cdot\frac{P(P_i-P_j)}{\alpha_i-\alpha_j}\nonumber\\
&=\sum_{j=1}^n\lambda_j^2P_j^2-2P\sum_{j=1}^n\lambda_j\left(\sum_{\substack{1\leq i\leq n\\i\neq j}}\frac{\lambda_i}{\alpha_i-\alpha_j}\right)P_j\nonumber\\
&=\sum_{j=1}^n\lambda_j^2P_j^2\label{r2}\text{.}
\end{align}
On the other hand, we also have the identity
$$PP_j'=P_j\sum_{\substack{1\leq i\leq n\\i\neq j}}P_i$$
and hence
\begin{align}
PQ'&=\sum_{j=1}^n-\lambda_j^2PP_j\nonumber\\
&=\sum_{j=1}^n-\lambda_j^2P_j\sum_{\substack{1\leq i\leq n\\i\neq j}}P_i\nonumber\\
&=-\sum_{\substack{1\leq i,j\leq n\\i\neq j}}\lambda_j^2P_iP_j\label{pq'}
\end{align}
and moreover
\begin{align}
P'Q&=-\left(\sum_{i=1}^nP_i\right)\left(\sum_{j=1}^n-\lambda_j^2P_j\right)\nonumber\\
&=-\sum_{j=1}^n\lambda_j^2P_j^2-\sum_{\substack{1\leq i,j\leq n\\i\neq j}}\lambda_j^2P_iP_j\text{.}\label{p'q}
\end{align}
The computed expressions for $PQ'$ \eqref{pq'}, $P'Q$ \eqref{p'q} and $R^2$ \eqref{r2} imply directly that $PQ'-P'Q=R^2$, as desired.

Galois invariance of $W$ is straightforward.
\end{proof}

\begin{lemma}
\label{speiser}
Let $L/K$ be a finite Galois extension, and let $V\neq\{0\}$ be a vector space over $L$. Suppose that $G=\Gal(L/K)$ acts semi-linearly on $V$, in the sense that
$$\sigma(x+y)=\sigma(x)+\sigma(y)\quad\text{and}\quad\sigma(\lambda x)=\sigma(\lambda)\sigma(x)\quad\text{for all }x,y\in V\text{ and }\lambda\in L\text{.}$$
Then $V^G\neq\{0\}$. (Here $V^G$ denotes the set of elements of $V$ which are invariant under the action of $G$.)
\end{lemma}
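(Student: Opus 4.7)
This is the classical ``descent'' statement (often attributed to Speiser, and closely related to Hilbert 90), and the plan is to prove it by the standard trace/averaging trick combined with Artin's theorem on the linear independence of characters. Fix any nonzero $v\in V$, and for each $\lambda\in L$ form
$$T(\lambda)=\sum_{\sigma\in G}\sigma(\lambda v)=\sum_{\sigma\in G}\sigma(\lambda)\,\sigma(v)\in V.$$
A direct computation using both semi-linearity axioms shows that for every $\tau\in G$ one has $\tau(T(\lambda))=T(\lambda)$, so $T(\lambda)\in V^G$ for every $\lambda$. The whole problem is thus reduced to exhibiting some $\lambda\in L$ for which $T(\lambda)\neq 0$.

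To produce such a $\lambda$, I would argue by contradiction and assume $T(\lambda)=0$ for every $\lambda\in L$. The $L$-subspace of $V$ generated by the finite set $\{\sigma(v):\sigma\in G\}$ is finite-dimensional; let $v_1,\ldots,v_r$ be an $L$-basis of it, and write $\sigma(v)=\sum_i c_{\sigma,i}v_i$ with $c_{\sigma,i}\in L$. Each $\sigma$ is a bijection of $V$ (its inverse being $\sigma^{-1}$ acting semi-linearly), so $\sigma(v)\neq 0$ and hence, for each fixed $\sigma$, at least one coefficient $c_{\sigma,i}$ is nonzero. Expanding the assumed identity $T(\lambda)=0$ and collecting coefficients of the $v_i$, the $L$-linear independence of $v_1,\ldots,v_r$ yields, for every $i$,
$$\sum_{\sigma\in G}c_{\sigma,i}\,\sigma(\lambda)=0\qquad\text{for all }\lambda\in L.$$

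The final step is to invoke Artin's theorem: the distinct field automorphisms $\sigma:L\to L$ are linearly independent over $L$, so the displayed identity forces $c_{\sigma,i}=0$ for every $\sigma$ and every $i$, contradicting the conclusion of the previous paragraph. Hence some $T(\lambda)$ is nonzero, producing the required element of $V^G$. The only subtle step is the appeal to Artin's independence of characters; once that is in hand, the rest is bookkeeping with the semi-linearity relations.
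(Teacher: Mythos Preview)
Your proof is correct and follows essentially the same approach as the paper: both form the averaged element $\sum_{\sigma}\sigma(\lambda v)$, observe it lies in $V^G$, assume it vanishes for all $\lambda$, and derive a contradiction from the linear independence of characters (Dedekind/Artin). The only cosmetic difference is packaging: the paper fixes a $K$-basis $(e_1,\dots,e_n)$ of $L$ and uses invertibility of the matrix $(\sigma_j(e_i))$ to conclude $\sigma_j(v)=0$, whereas you expand the $\sigma(v)$ in an $L$-basis of their span and read off the vanishing of the coefficients directly---same idea, same input, same conclusion.
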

\begin{proof}
Suppose $V^G=\{0\}$. Let $G=\{\sigma_1,\ldots,\sigma_n\}$. For every $v\in V$, $\sum_{j=1}^n\sigma_j(v)$ is invariant under the action of $G$, and hence
$$\sum_{j=1}^n\sigma_j(v)=0\text{.}$$
Let $(e_1,\ldots,e_n)$ be a basis of $L$ over $K$, and take any $v\neq0$ in $V$. What we have seen before implies that
\begin{equation}
\label{dep}
\sum_{j=1}^n\sigma_j(e_iv)=0\text{, i.e., }\sum_{j=1}^n\sigma_j(e_i)\sigma_j(v)=0
\end{equation}
for $i=1,\ldots,n$. But since, by a theorem of Dedekind, $\sigma_1,\ldots,\sigma_n$ are linearly independent over $K$, the vectors $(\sigma_j(e_1),\ldots,\sigma_j(e_n))$ for $j=1,\ldots,n$ are linearly independent over $K$, and the matrix $(\sigma_j(e_i))_{i=1,\ldots,n}^{j=1,\ldots,n}$ is invertible. Therefore \eqref{dep} implies that $\sigma_j(v)=0$ for $j=1,\ldots,n$. In particular, $v=0$, which is a contradiction.
\end{proof}
We are now in a position to prove Lemma \ref{pq'-p'q}.
\begin{proof}[Proof of Lemma \ref{pq'-p'q}]
Let $L$ and $W$ be as in Lemma \ref{pq'-p'qw}. The action of $\Gal(L/\mathbb{Q})$ on $W$ is clearly semi-linear, hence by Lemma \ref{speiser} it follows that $W^{\Gal(L/\mathbb{Q})}\neq\{0\}$. This means that there exists a nonzero $R\in W\cap\mathbb{Q}[X]$. We now want to find $Q$ with rational coefficients and degree at most $n-1$ such that $PQ'-P'Q=R^2$. This amounts to solving certain linear (non-homogeneous) equations, with rational coefficients, on the coefficients of $Q$ yet to determine. However we know that this system of equations has a solution over $L$, by definition of $W$. Since the equations are linear and have coefficients in $\mathbb{Q}$ this implies that the system also has a solution over $\mathbb{Q}$, as desired.
\end{proof}

The importance of Lemma \ref{pq'-p'q} for our work comes from the following result.

\begin{lemma}
\label{p-tq}
Let $P\in\mathbb{Q}[X]$ be monic irreducible of degree $n$, and suppose $Q,R\in\mathbb{Q}[X]$ have degree at most $n-1$ and $PQ'-P'Q=R^2$. Suppose furthermore that $\Delta(P)$ is a square. Then $\Delta(P-tQ)$ is a square for every $t\in\bQ$.
\end{lemma}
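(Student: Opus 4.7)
The plan is to express $\Delta(P-tQ)$ via the resultant identity of Proposition \ref{resdisc} and exploit the hypothesis $PQ'-P'Q = R^2$ to show that $\Delta(P-tQ)$ differs from $\Delta(P)$ by a square. Writing $F_t := P - tQ$ (which is monic of degree $n$), the engine of the proof is the Wronskian-type identity
\begin{equation*}
QF_t' - Q'F_t \;=\; QP' - Q'P \;=\; -R^2,
\end{equation*}
so in particular $QF_t' \equiv -R^2 \pmod{F_t}$.

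From this congruence I would compute $\Res(F_t, QF_t')$ in two different ways using Proposition \ref{propres}. On the one hand, part (iii) gives $\Res(F_t, QF_t') = \Res(F_t, Q)\,\Res(F_t, F_t')$. On the other hand, since $F_t$ is monic, part (ii) says $\Res(F_t, QF_t') = \Res(F_t, -R^2)$, and combining (iii) with the elementary evaluation $\Res(F_t, -1) = (-1)^n$ against a constant polynomial gives $\Res(F_t, -R^2) = (-1)^n \Res(F_t, R)^2$. A short application of parts (i) and (ii) of the same proposition further shows that $\Res(F_t, Q) = \Res(P, Q)$, i.e.\ this quantity is actually independent of $t$. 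Combining these yields
\begin{equation*}
\Res(P, Q) \cdot \Res(F_t, F_t') \;=\; (-1)^n\,\Res(F_t, R)^2.
\end{equation*}

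Converting $\Res(F_t, F_t')$ back into $\Delta(F_t)$ via Proposition \ref{resdisc} and dividing the resulting identity by its specialisation at $t = 0$ makes both the sign and the constant $\Res(P, Q)$ disappear, leaving the clean formula
\begin{equation*}
\frac{\Delta(F_t)}{\Delta(P)} \;=\; \left(\frac{\Res(F_t, R)}{\Res(P, R)}\right)^2.
\end{equation*}
Since $\Delta(P)$ is assumed to be a rational square, this immediately exhibits $\Delta(F_t)$ as one as well.

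The only real subtlety is a handful of degenerate cases to dispose of. I need $\Res(P, R) \neq 0$, which follows from $P$ being irreducible of degree $n > \deg R$ provided $R \neq 0$; and if $R = 0$ then the Wronskian identity forces $Q = 0$, in which case $F_t = P$ and the claim is trivial. Conversely, if $F_t$ acquires a multiple root $\alpha$ for some $t$, then evaluating the Wronskian identity at $\alpha$ forces $R(\alpha) = 0$, so $\Res(F_t, R) = 0$ and both sides of the displayed formula vanish consistently. Apart from this, the work is routine sign bookkeeping from Proposition \ref{propres}, which I do not expect to pose any serious difficulty.
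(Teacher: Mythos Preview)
Your proposal is correct and follows essentially the same approach as the paper: both multiply $\Res(F_t,F_t')$ by $\Res(F_t,Q)=\Res(P,Q)$, use Proposition~\ref{propres} to reduce $QF_t'$ modulo $F_t$ to $-R^2$, and then compare with the specialisation at $t=0$ to kill the sign and the constant $\Res(P,Q)$. Your packaging via the Wronskian identity $QF_t'-Q'F_t=-R^2$ and the final division step is slightly slicker (and parity-free), but the underlying computation is identical to the paper's.
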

\begin{proof}
We will assume that $n$ is odd throughout the proof, since this is the main case of interest to us, but with slight sign variations the same proof works for even $n$. We use the description of discriminants in terms of resultants. Under the hypothesis of the lemma, we compute
\begin{align*}
&\Delta(P-tQ)\Res(P,Q)\\
&=(-1)^{\frac{n-1}{2}}\Res(P-tQ,P'-tQ')\Res(P,Q)&&\text{by Proposition \ref{resdisc}}\\
&=(-1)^{\frac{n-1}{2}}\Res(P-tQ,P'-tQ')\Res(P-tQ,Q)&&\text{by Proposition \ref{propres}(i),(ii)}\\
&=(-1)^{\frac{n-1}{2}}\Res(P-tQ,P'Q-tQ'Q)&&\text{by Proposition \ref{propres}(iii)}\\
&=(-1)^{\frac{n-1}{2}}\Res(P-tQ,PQ'-R^2-tQ'Q)\\
&=(-1)^{\frac{n-1}{2}}\Res(P-tQ,Q'(P-tQ)-R^2)\\
&=(-1)^{\frac{n-1}{2}}\Res(P-tQ,-R^2)&&\text{by Proposition \ref{propres}(ii)}\\
&=(-1)^{\frac{n+1}{2}}\Res(P-tQ,R^2)\\
&=(-1)^{\frac{n+1}{2}}\Res(P-tQ,R)^2&&\text{by Proposition \ref{propres}(iii)}\text{.}\\
\end{align*}

We conclude that $(-1)^{\frac{n+1}{2}}\Res(P,Q)\Delta(P-tQ)$ is a square for all $t$. In particular, by setting $t=0$, this implies that $(-1)^{\frac{n+1}{2}}\Res(P,Q)\Delta(P)$ is a square. Since $\Delta(P)$ is a (nonzero) square by assumption it follows that $(-1)^{\frac{n+1}{2}}\Res(P,Q)$ is a square. But then the fact that $(-1)^{\frac{n+1}{2}}\Res(P,Q)\Delta(P-tQ)$ is a square implies that $\Delta(P-tQ)$ is a square (note that $\Res(P,Q)\neq0$: since $P$ is irreducible of degree $n$ and $Q$ has degree at most $n-1$, $P$ and $Q$ cannot have a common root).
\end{proof}

We are finally ready to prove Theorem \ref{oddcase}.

\begin{proof}[Proof of Theorem \ref{oddcase}]
Take \emph{any} monic polynomial $P$, of degree $n$, with integer coefficients and Galois group $A_n$. That such a polynomial exists is standard; see for example \cite{NV1983}, Corollary 2.4. By Lemma \ref{pq'-p'q} we can find $Q,R\in\mathbb{Q}[X]$ with degrees at most $n-1$ such that $PQ'-P'Q=R^2$. By appropriately rescaling $Q$ and $R$ we can actually assume that $Q,R\in\mathbb{Z}[X]$.

We now recall a famous theorem of Dedekind (\cite{Cox2012}, Theorem 13.4.5) which states that, if $f\in\mathbb{Z}[X]$ is a monic irreducible polynomial and $p$ is a prime such that the reduction of $f$ modulo $p$ is separable (in other words, such that $p$ does not divide $\Delta(f)$) and $f\, (\mathrm{mod}\text{ } p)$ factors in $\mathbb{F}_p[X]$ into irreducibles of degrees $d_1,\ldots,d_k$, then the Galois group of $f$ over $\mathbb{Q}$ contains an element which, viewed as a permutation of the roots of $f$, has cycle type $(d_1,\ldots,d_k)$. Such an element is obtained by considering the Frobenius element corresponding to some prime ideal lying over $p$ in the splitting field $L$ of $f$. Therefore, by Chebotarev's density theorem, given $d_1,\ldots,d_k$ such that $\Gal(L/\mathbb{Q})$ contains a permutation of cycle type $(d_1,\ldots,d_k)$, there exist infinitely many primes $p$ such that $f\, (\mathrm{mod}\text{ } p)$ factors in $\mathbb{F}_p[X]$ into irreducibles of degrees $d_1,\ldots,d_k$.

Let now $\ell$ be a prime with $\frac{n}{2}<\ell\leq n$, which exists by Bertrand's Postulate. Since the Galois group of $P$ is $A_n$ and $\ell$ is odd, the Galois group of $P$ contains an $\ell$-cycle. By our remarks above there exists one prime $p$ (in fact, infinitely many) such that the reduction $P\, (\mathrm{mod}\text{ }p)$ factors in $\mathbb{F}_p[X]$ as a product of one irreducible polynomial of degree $\ell$ and $n-\ell$ polynomials of degree $1$. Similarly, there exists a prime $q$ such that $P\, (\mathrm{mod}\text{ }q)$ factors in $\mathbb{F}_q[X]$ as a product of one irreducible polynomial of degree $3$ and $n-3$ polynomials of degree $1$, and there exists a prime $r$ such that $P\, (\mathrm{mod}\text{ }r)$ is itself irreducible in $\mathbb{F}_r[X]$. We now consider polynomials of the form
$$f=P-pqrNQ,\quad\text{with $N$ integer.}$$
We claim that all these have Galois group $A_n$. By reducing modulo $r$ we see that $f$ is irreducible. Modulo $p$, the polynomial $f$ factors as a product of an irreducible polynomial of degree $\ell$ and $n-\ell$ irreducible polynomials of degree $1$. Therefore, by Dedekind's Theorem, the Galois group of $f$ over $\mathbb{Q}$ contains an $\ell$-cycle. By the same reasoning carried out with the prime $q$ we see that the Galois group of $f$ over $\mathbb{Q}$ contains a $3$-cycle. Now a transitive subgroup of $S_n$ containing an $\ell$-cycle and a $3$-cycle in $S_n$ must contain the alternating group $A_n$; in fact, the containment of an $\ell$-cycle implies that this subgroup must primitive, and then a famous result of Jordan asserts that a primitive subgroup of $S_n$ containing a $3$-cycle must contain $A_n$ (see for example ``Theorem (Jordan)'' in \cite{Isaacs}). So the Galois group of $f$ contains $A_n$. Since $\Delta(f)$ is a square by Lemma \ref{p-tq}, it follows from Theorem \ref{sqdisc} that the Galois group of $f$ equals $A_n$. Hence we obtain the desired line of polynomials.
\end{proof}

\begin{example}
We use the strategy of the proof above to construct a family of monic polynomials of degree $5$ in $\mathbb{Z}[X]$ with $\gg H$ polynomials having coefficients in $[-H,H]$. The polynomial
$$P(X)=X^5+4X^4-5X^3-28X^2-18X-2$$
can be shown to have Galois group $A_5$. The polynomials
$$Q(X)=-49X^4-287X^3-474X^2-191X-7$$
and
$$R(X)=7X^4+41X^3+81X^2+47X+16$$
are such that $PQ'-P'Q=R^2$. Therefore, by Lemma \ref{p-tq}, the discriminant of $P-tQ$ is a square for all $t$. In fact, one can check that
$$\Delta(P-tQ)=(22068963t^4-6897879t^3+946647t^2-62469t+1762)^2\text{.}$$
In the proof above we can now take, say, $\ell=5$. We find that $P$ is irreducible in $\mathbb{F}_7[X]$, so we can take $p=7$, and $r=7$ as well. On the other hand we see that $P$ factors in $\mathbb{F}_{17}[X]$ as
$$(X^3+10X^2+8X+9)(X+6)(X+5)$$
and $X^3+10X^2+8X+9$ is irreducible in $\mathbb{F}_{17}[X]$. Therefore all polynomials of the form
\begin{align*}
f(X)&=P(X)-7\times 7\times 17NQ(X)\\
&=X^5+(40817N+4)X^4+(239071N-5)X^3\\
&+(394842N-28)X^2+(159103N-18)X+(5831N-2)
\end{align*}
have Galois group $A_5$.
\end{example}

\begin{obs}
\label{obs}
We remark here that the strategy of the final part of the proof of Theorem \ref{oddcase} actually yields the following. Suppose we have a line of polynomials of the form \eqref{lines}, such that the discriminant of any polynomial in the line is a square and \emph{at least one} polynomial in the line has Galois group $A_n$. Then we may find a ``sub-line'' whose polynomials all have Galois group $A_n$.
\end{obs}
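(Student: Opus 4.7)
The plan is to re-run, in streamlined form, the Dedekind--Chebotarev argument from the last paragraph of the proof of Theorem \ref{oddcase}. First, by translating the parameter (replacing $P$ by $P - t_0 Q$, where $t_0$ is any integer at which the Galois group equals $A_n$), I would assume without loss of generality that $P$ itself has Galois group $A_n$ and that $\Delta(P - tQ)$ is a square for every $t \in \bZ$.

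Next I would apply Chebotarev's density theorem to the splitting field of $P$ together with Dedekind's factorization theorem to secure three distinct primes $p, q, r$, each coprime to $\Delta(P)$, such that $P \pmod p$ factors as an irreducible of degree $\ell$ times linear factors (where $\ell$ is a prime in $(n/2, n]$, supplied by Bertrand's postulate), $P \pmod q$ factors as an irreducible cubic times linear factors, and $P \pmod r$ is irreducible. Each cycle type required lies in $A_n$; the last uses the existence of an $n$-cycle in $A_n$, which holds precisely when $n$ is odd.

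The sub-line I would exhibit is $f_N(X) = P(X) - pqrN \cdot Q(X)$ for $N \in \bZ$, i.e.\ the original parameter $T$ restricted to the arithmetic progression $pqr\bZ$. Since $f_N \equiv P \pmod{pqr}$, the three factorization patterns transfer verbatim to $f_N$ modulo $p, q, r$. Irreducibility of $f_N \pmod r$ upgrades to irreducibility over $\bQ$, and Dedekind's theorem at $p$ and $q$ yields an $\ell$-cycle and a $3$-cycle in $\Gal(f_N/\bQ)$. The $\ell$-cycle with $\ell > n/2$ forces primitivity, so Jordan's theorem (as cited in the proof of Theorem \ref{oddcase}) gives $A_n \subseteq \Gal(f_N/\bQ)$. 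The hypothesis that $\Delta(f_N)$ is a square, combined with Theorem \ref{sqdisc}, collapses this containment to equality, so every $f_N$ has Galois group exactly $A_n$.

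The only real obstacle is securing the prime $r$ at which $P$ is irreducible modulo $r$; this is precisely where the argument relies on the odd parity of $n$, since $A_n$ contains no $n$-cycle when $n$ is even. As the observation sits directly after the proof of the odd case, I would highlight this restriction rather than develop the more delicate workaround (combining several reduction types to exclude every proper $\bZ$-factorization of $f_N$) that would be required for general even $n$.
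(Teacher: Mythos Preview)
Your proposal is correct and is exactly what the paper intends: the Observation carries no separate proof in the paper, only the pointer ``the strategy of the final part of the proof of Theorem~\ref{oddcase},'' and you have faithfully reproduced that strategy (translate so that $P$ itself is an $A_n$-polynomial, use Chebotarev and Dedekind to pick primes $p,q,r$ witnessing an $\ell$-cycle, a $3$-cycle, and irreducibility, and pass to the sub-line $P - pqrN\,Q$).

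One remark on your closing paragraph. You are right that the literal argument needs an $n$-cycle in $A_n$ and hence $n$ odd; but note that the paper later \emph{invokes} Observation~\ref{obs} for even $n$ (at the end of the proof of Theorem~\ref{evencase}), so the ``more delicate workaround'' you allude to is not optional for the paper's purposes. It is, however, short: for even $n\ge 4$ replace the single prime $r$ by two primes $r,s$ such that $P\bmod r$ has type $(n-1,1)$ (an $(n-1)$-cycle lies in $A_n$) and $P\bmod s$ has a fixed-point-free even cycle type, e.g.\ $(n-2,2)$. The first forces any putative $\bQ$-factor of $f_N$ to have degree $\le 1$, the second forbids linear factors, so $f_N$ is irreducible; the rest of the argument is unchanged.
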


\section{The even degree case}
\label{even}
In this section we prove Theorems \ref{n-1square} and \ref{evencase}. The proof hinges on the following result.

\begin{lemma}
\label{pqr}
Let $n\geq3$ be a positive integer, let $k$ be a field of characteristic $0$ and let $p,q,r,p',q',r'$ be $k$-rational points in $\bP^1_k$, such that $p,q,r$ are pairwise distinct and $p',q',r'$ are also pairwise distinct. Then there exists a unique morphism $f:\bP^1_k\to\bP^1_k$ of degree $n$ such that the fiber above $p'$ contains the point $p$ with multiplicity $n-1$, the fiber above $q'$ contains the point $q$ with multiplicity $n-1$ and the fiber above $r'$ contains the point $r$ with multiplicity $3$.
\end{lemma}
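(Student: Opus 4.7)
The plan is to use the triple transitivity of $\mathrm{PGL}_2(k)$ on $\bP^1_k$ to reduce the problem to a normalized case, and then produce the map by a direct computation. Picking automorphisms $\varphi, \psi$ of $\bP^1_k$ that send $(p,q,r)$ and $(p',q',r')$ respectively to $(0,\infty,1)$, and replacing $f$ by $\psi \circ f \circ \varphi^{-1}$, both existence and uniqueness reduce to the case $p=p'=0$, $q=q'=\infty$, $r=r'=1$, which I would assume from now on.

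Next I would pin down the shape of such an $f$. Writing $f = A/B$ with $A, B \in k[X]$ coprime and $\max(\deg A, \deg B) = n$, the requirement that $X=0$ be a preimage of $0$ with ramification index exactly $n-1$ forces $A(X) = X^{n-1}(aX+b)$ with $a, b \ne 0$ and $B(0) \ne 0$. Similarly, the ramification condition at $\infty$ forces $\deg A - \deg B = n-1$, so $\deg A = n$ and $\deg B = 1$. After rescaling so that $B$ is monic, one concludes that
\[ f(X) = \frac{X^{n-1}(aX+b)}{X+d}, \qquad a, b, d \in k^\times. \]

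The main content is now the condition at $r = r' = 1$. By Riemann--Hurwitz (Theorem~\ref{rh}) the total ramification of a degree-$n$ cover $\bP^1 \to \bP^1$ equals $2n-2$, and the three prescribed ramifications already contribute $(n-2)+(n-2)+2 = 2n-2$. Hence no other ramification is possible, and requiring the fiber over $1$ to contain $1$ with multiplicity at least $3$ automatically forces this multiplicity to be exactly $3$. This condition translates into requiring that $N(X) := aX^n + bX^{n-1} - X - d$ have a zero of order $\ge 3$ at $X = 1$, i.e., into the three linear equations $N(1) = N'(1) = N''(1) = 0$ in the unknowns $a, b, d$. A short determinant computation shows this system is non-degenerate (its determinant equals $n$), so it admits a unique solution, which works out explicitly to $a = d = -(n-2)/n$ and $b = 1$.

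To conclude, I would verify legitimacy of the solution: the values $a$, $b$, $d$, and $B(1) = 1+d$ are all nonzero, and a brief computation yields $N'''(1) \ne 0$, confirming that the order of vanishing at $1$ is exactly $3$. I expect no serious obstacle; the crucial point is that Riemann--Hurwitz saturates at the three prescribed ramification points, which makes the problem rigid and the linear system above non-degenerate.
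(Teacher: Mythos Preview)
Your proposal is correct and follows essentially the same route as the paper: normalize via triple transitivity to $(0,\infty,1)$, read off the shape of $f$ from the ramification at $0$ and $\infty$, impose three linear conditions from the triple contact at $1$, and invoke Riemann--Hurwitz to see that no further ramification can occur. The only cosmetic difference is that the paper works in homogeneous coordinates with four parameters up to scaling, whereas you dehomogenize by setting the leading coefficient of $B$ to $1$; your solution $a=d=-(n-2)/n$, $b=1$ is exactly the paper's $(a,b,c,d)=(n-2,-n,-n,n-2)$ after this normalization. One small point: to fully verify that $A$ and $B$ are coprime you should also note $b\neq ad$ (so that $X+d\nmid aX+b$), which for your values amounts to $(n-2)^2\neq n^2$.
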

\begin{proof}
Since the automorphism group $\Aut_k(\bP^1_k)$ acts $3$-transitively on $\bP^1_k$, we may assume without loss of generality that $(p,q,r)=(p',q',r')=(0,\infty,1)$. Any morphism $f:\bP^1_k\to\bP^1_k$ of degree $n$ can be given explicitly as
$$f([x:y])=[P(x,y):Q(x,y)]$$
where $P,Q$ are relatively prime homogeneous polynomials of degree $n$ in $k[x,y]$. The condition that the fiber above $0$ contains $0$ with multiplicity $n-1$ prescribes that $P(x,y)=x^{n-1}(ax+by)$ for some $a,b\in k$ with $b\neq0$. Similarly the condition that the fiber above $\infty$ contains $\infty$ with multiplicity $n-1$ prescribes that $Q(x,y)=y^{n-1}(cx+dy)$ for some $c,d\in k$ with $c\neq0$. It is easy to see that the coprimality condition is now equivalent to $a$, $d$ and $ad-bc$ all being nonzero.

We must now show that there exists exactly one valid choice of $a,b,c,d\in k$, up to scaling, such that the fiber of the resulting map $f$ above $1$ contains the point $1$ with multiplicity $3$. The condition that the fiber above $1$ contains the point $1$ with multiplicity \emph{at least} $3$ is equivalent to requiring that
$$(x-y)^3\quad\text{divides}\quad P(x,y)-Q(x,y)\text{.}$$
Firstly, the condition that $f$ maps $1$ to $1$ is equivalent to having the polynomial identity $P(x,x)=Q(x,x)$, which translates into
\begin{equation}
\label{lineq1}
a+b=c+d\text{.}
\end{equation}
If the above holds, then setting $P'(x,y)=\frac{\partial}{\partial x}P(x,y)$ and $Q'(x,y)=\frac{\partial}{\partial x}Q(x,y)$ the condition that $(x-y)^2$ divides $P(x,y)-Q(x,y)$ is equivalent to the polynomial identity $P'(x,x)=Q'(x,x)$. One computes that $P'(x,y)=nax^{n-1}+(n-1)bx^{n-2}y$ and $Q'(x,y)=cy^{n-1}$, so that the identity $P'(x,x)=Q'(x,x)$ becomes equivalent to
\begin{equation}
\label{lineq2}
na+(n-1)b=c\text{.}
\end{equation}
Finally, if the above two equations hold, setting $P''(x,y)=\frac{\partial^2}{\partial x^2}P(x,y)$ and $Q''(x,y)=\frac{\partial^2}{\partial x^2}Q(x,y)$, the condition that $(x-y)^3$ divides $P(x,y)-Q(x,y)$ is equivalent to the polynomial identity $P''(x,x)=Q''(x,x)$. One computes that  $P''(x,y)=n(n-1)ax^{n-2}+(n-1)(n-2)bx^{n-3}y$ and $Q''(x,y)=0$, so that the identity $P''(x,x)=Q''(x,x)$ translates into
\begin{equation}
\label{lineq3}
na+(n-2)b=0\text{.}
\end{equation}
Now the solution set of the system formed by the equations \eqref{lineq1},\eqref{lineq2} and \eqref{lineq3} is one-dimensional, spanned by $(a,b,c,d)=(n-2,-n,-n,n-2)$, establishing the existence and uniqueness of the desired morphism $f$. That the multiplicity of $1$ on the fiber above $1$ of the obtained morphism is \emph{exactly} $3$ can be seen by differentiating once more, but we note that it follows from the Riemann-Hurwitz formula (Theorem \ref{rh}). Indeed, keeping the notation from Theorem \ref{rh} and keeping in mind that $\bP^1$ has genus $0$, the points $0$ and $\infty$ contribute $n-2$ each to the sum $\sum_{x\in\bP^1}(e_x-1)$, and the point $1$ contributes at least $2$. But the Riemann-Hurwitz formula tells us that this sum is precisely $2n-2=(n-2)+(n-2)+2$, so that there can be no further contributions and the multiplicity of $1$ on the fiber above $1$ is exactly $3$.
\end{proof}

We can now prove Theorem \ref{evencase}.
\begin{proof}[Proof of Theorem \ref{evencase}]
Motivated by Lemma \ref{pqr}, for an even integer $n>3$ we consider the unique morphism $f:\bP^1_\bQ\to\bP^1_\bQ$ such that the fibers above $0$, $\infty$ and $(n-2)^{n-2}$ contain the points $0$, $\infty$ and $n-2$ with multiplicities $n-1$, $n-1$ and $3$, respectively. We observe that, as explained at the end of the proof of Lemma \ref{pqr}, the Riemann-Hurwitz formula implies that this morphism is actually unramified at any point other than $0$, $\infty$ and $n-2$. We will regard it as a morphism from $\bP^1_{\overline{\bQ}}$ to $\bP^1_{\overline{\bQ}}$ that happens to be defined over $\bQ$. It is straightforward to compute this morphism using the work we have done in the proof of Lemma \ref{pqr}. We first post-compose the morphism that came out of that proof with the automorphsim $[x:y]\to\left[\frac{x}{n-2}:y\right]$ of $\bP^1_\bQ$. Note that this automorphism maps the triple $(0,\infty,n-2)$ to $(0,\infty,1)$. We then pre-compose it with the automotphism $[x:y]\to[(n-2)^{n-2}x:y]$, and we obtain the following morphism (given as a rational function on the affine chart corresponding to the standard inclusion $\bbA^1\subseteq\bP^1$):
$$f(x)=\frac{x^n-nx^{n-1}}{-nx+(n-2)^2}\text{.}$$
Under the curves-fields correspondence, this morphism corresponds to the extension of $\overline{\bQ}(T)$ obtained by adjoining a root of
$$F(X)=X^n-nX^{n-1}-T(-nX+(n-2)^2)\text{.}$$ We obtain from Theorem \ref{cover} that the Galois group of this polynomial over $\overline{\bQ}(T)$ is a subgroup of $S_n$ generated by two $(n-1)$-cycles and a $3$-cycle. Such a subgroup is necessarily contained in $A_n$ (here we used that $n$ is even). On the other hand, being the Galois group of an irreducible polynomial, it must be a transitive subgroup of $S_n$, and a transitive subgroup of $S_n$ containing an $(n-1)$-cycle must be primitive. Since it contains a $3$-cycle, a famous result of Jordan (see for example ``Theorem (Jordan)'' in \cite{Isaacs}) now implies that $P(X)$ has Galois group $A_n$ over $\overline{\bQ}(T)$.

We now prove that it actually has Galois group $A_n$ over $K(T)$ for some at most extension $K$ of $\bQ$ that has degree at most $2$. We observe here that the Galois group of $F(X)$ over $K(T)$ can at most \emph{decrease} when we extend the field $K$. Indeed, if $K\subseteq L$ are algebraic extensions of $\bQ$ and $S$ is the splitting field of $F(X)$ over $K(T)$, then the splitting field of $F(X)$ over $L(T)$ is the compositum of $S$ and $L(T)$, and it is a general fact that if $E\subseteq F$ is a Galois extension of fields and $G$ is an extension of $E$ then there is an injection
\begin{center}
\begin{tikzcd}
\mathrm{Gal}(FG/G) \arrow[r, hook] & \mathrm{Gal}(F/E)\text{.}
\end{tikzcd}
\end{center}
So for any number field $K$ the Galois group of $F(X)$ over $K(T)$ is either $S_n$ or $A_n$, and we can distinguish between the two using Theorem \ref{sqdisc}. Since the Galois group of $F(X)$ over $\overline{\bQ}(T)$ is $A_n$, the discriminant of $F(X)$ is a square in $\overline{\bQ}(T)$. However this discriminant is also a polynomial in $T$ with rational coefficients. This implies that this discriminant is of the form $ad(T)^2$ for some polynomial in $d(T)\in\bQ(T)$. But this is clearly a square in $K(T)$ for $K=\bQ(\sqrt{a})$, so for this choice of $K$ the Galois group of $F(X)$ over $K(T)$ is $A_n$.

By Hilbert's irreducibility Theorem, $F(X)$ yields a line containing at least one polynomial with Galois group $A_n$, and Observation \ref{obs} completes the proof.
\end{proof}

In order to prove Theorem \ref{n-1square}, we will compute explicitly the discriminant of $F(X)$ over $\bQ(T)$ and show that, if $n-1$ is a square, we can take $a=1$ in the above proof (i.e. the discriminant of $F(X)$ is actually a square in $\bQ(T)$), and hence we can take $K=\bQ$ in the proof of Theorem \ref{evencase}.

\begin{proof}[Proof of Theorem \ref{n-1square}]
Let
$$P(X)=X^n-nX^{n-1}\quad\text{and}\quad Q(X)=-nX+(n-2)^2\text{.}$$
We will prove that, if $n-1$ is an odd square, then $\Delta(P-TQ)$ is a square in $\bQ(T)$, which, as explained before, implies that we can take $a=1$ (in other words, $K=\bQ$) in the proof of Theorem \ref{evencase}. For that we compute, similarly to what we did in the proof of Lemma \ref{p-tq}, that
\begin{align*}
&\Delta(P-TQ)\Res(P,Q)\\
&=(-1)^{\frac{n}{2}}\Res(P-TQ,P'-TQ')\Res(P,Q)&&\text{by Proposition \ref{resdisc}}\\
&=(-1)^{\frac{n}{2}}\Res(P-TQ,P'-TQ')\Res(P-TQ,Q)&&\text{by Proposition \ref{propres}(i),(ii)}\\
&=(-1)^{\frac{n}{2}}\Res(P-TQ,P'Q-TQ'Q)&&\text{by Proposition \ref{propres}(iii)}\\
&=(-1)^{\frac{n}{2}}\Res(P-TQ,Q'(P-TQ)+P'Q-PQ')\\
&=(-1)^{\frac{n}{2}}\Res(P-tQ,P'Q-PQ')\text{.}&&\text{by Proposition \ref{propres}(ii)}\\
\end{align*}
We compute directly that
$$P'Q-PQ'=-n(n-1)X^{n-2}(X-(n-2))^2$$
and therefore
\begin{align*}
&\Delta(P-TQ)\Res(P,Q)\\
&=(-1)^{\frac{n}{2}}\Res(P-TQ,-n(n-1)X^{n-2}(X-(n-2))^2)\\
&=(-1)^{\frac{n}{2}}n^n(n-1)^n\Res(P-TQ,X)^{n-2}\Res(P-TQ,X-(n-2))^2\\
&=(-1)^{\frac{n}{2}}n^n(n-1)^n(P(0)-TQ(0))^{n-2}(P(n-2)-TQ(n-2))^2\\
&=4(-1)^{\frac{n}{2}}n^n(n-1)^n(n-2)^{2n-2}T^{n-2}(T-(n-2)^{n-2})^2\text{.}\\
\end{align*}
On the other hand, we also compute that
\begin{align*}
\Res(P,Q)&=\Res(X^{n-1}(X-n),-nX+(n-2)^2)\\
&=\Res(X,-nX+(n-2)^2)^{n-1}\Res(X-n,-nX+(n-2)^2)\\
&=-4(n-1)(n-2)^{2n-2}\text{.}
\end{align*}
Combining the last two equalities it follows that
\begin{equation}
\label{discp-tq}
\Delta(P-TQ)=(-1)^{\frac{n}{2}+1}(n-1)^{n-1}n^nT^{n-2}(T-(n-2)^{n-2})^2\text{.}
\end{equation}
Now the above is clearly a square in $\bQ(T)$ if $n-1$ is an odd square, taking into account that if $n-1$ is an odd square then $n\equiv2\pmod{4}$ and therefore $(-1)^{\frac{n}{2}+1}=1$.
\end{proof}

\section{The quartic case}
\label{quarticsec}
In this section we present our work on the case of polynomials of degree $4$, with the aim of establishing Theorems \ref{class4}, \ref{nostrong} and \ref{hm}. We begin with some general considerations. We have pointed out before, in the proof of Theorem \ref{evencase}, that the Galois group, over $K(T)$, of a polynomial with coefficients in $\bQ(T)[X]$ either remains the same or decreases when we increase the algebraic extension $K$ of $\bQ$. So if $P$ and $Q$ are polynomials in $\bQ[X]$ such that $P-TQ$ defines an alternating line of quartic polynomials, the Galois group of $P-TQ$ over $\overline{\bQ}(T)$ is a subgroup of $A_4$. Of course, by definition, it actually equals $A_4$ if the corresponding line is strong; see the Introduction for the meaning of the dichotomy strong/weak lines.

Given such a line, we may associate to it the rational function $\frac{P}{Q}$, which we may then regard as a morphism $f$ (of degree $4$) of the projective line $\bP^1_\bQ$, or as a morphism of $\bP^1_{\overline{\bQ}}$ which happens to be defined over $\bQ$. The Galois group of $P-TQ$ over $\overline{\bQ}(T)$ is then the Galois group of the Galois closure of the field extension corresponding to the morphism of curves $f$. We may then use Theorem \ref{cover} to relate this Galois group to the geometry of $f$.

Since $f$ has degree $4$, the Riemann-Hurwitz formula (Theorem \ref{rh}) gives us
\begin{equation}
\label{rh4}
\sum_{x\in\bP^1_{\overline{\bQ}}}(e_x-1)=6\text{.}
\end{equation}
We rewrite the above sum as
\begin{equation}
\label{rh4v}
\sum_{y\in\bP^1_{\overline{\bQ}}}\sum_{f(x)=y}(e_x-1)=\sum_{y\in\bP^1_{\overline{\bQ}}}(4-\#f^{-1}(y))
\end{equation}
and observe that, in the notation of Theorem \ref{cover}, the tuples $(d_1,\ldots,d_k)$ corresponding to points $y\in\bP^1_{\overline{\bQ}}$ must correspond to cycle decompositions of \emph{even} permutations. The admissible tuples are therefore, apart from the empty tuple, $(3,1)$ and $(2,2)$, where for simplicity we ignore the ordering within each tuple. This shows in particular that all (nonzero) summands in \eqref{rh4v} equal $2$ and therefore, by \eqref{rh4}, there are exactly three such nonzero summands; in other words, $f$ admits exactly $3$ non-reduced fibers.

We can readily exclude the case in which one of these non-reduced fibers has ramification pattern $(3,1)$ and the other two have ramification pattern $(2,2)$. This is simply because there are no permutations $\sigma_1,\sigma_2,\sigma_3\in S_4$, with $\sigma_1$ having cycle type $(3,1)$ and $\sigma_2,\sigma_3$ having cycle type $(2,2)$, whose product is the identity, in contradiction with Theorem \ref{cover}. Moreover, we observe that if all three non-reduced fibers have ramification pattern $(2,2)$, then the Galois group of $P-TQ$ over $\overline{\bQ}$ is strictly smaller than $A_4$; indeed all permutations of cycle type $(2,2)$ in $S_4$ lie in an order $4$ subgroup isomorphic to the Klein four-group.

\bigskip

To sum up, given an alternating line of polynomials of degree $4$, we can associate to it a morphism $f:\bP^1_{\overline{\bQ}}\to\bP^1_{\overline{\bQ}}$, defined over $\bQ$, that ramifies above exactly $3$ points in the target, and such one of the following holds:
\begin{itemize}
\item\textbf{Case 1:} All three non-reduced fibers have ramification pattern $(3,1)$;
\item\textbf{Case 2:} Two non-reduced fibers have ramification pattern $(3,1)$, while the remaining one has ramification pattern $(2,2)$;
\item\textbf{Case 3:} All three non-reduced fibers have ramification pattern $(2,2)$.
\end{itemize}
Moreover, Case 3 cannot occur if the line giving rise to $f$ is a strong alternating line.

\bigskip

We now prove a Lemma that lays the groundwork for the proofs of \ref{nostrong} and Theorem \ref{hm}.

\begin{lemma}
\label{prenostrong}
There exists no morphism $f:\bP^1_{\overline{\bQ}}\to\bP^1_{\overline{\bQ}}$, defined over $\bQ$, that ramifies above exactly $3$ points in the target, fitting in Case 1 or Case 2 as described above.
\end{lemma}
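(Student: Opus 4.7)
The plan is to exploit the rigidity of morphisms with these ramification profiles (using Lemma \ref{pqr} and an analogue of its proof for Case 2), write down an explicit model of $f$ over $\bQ$, and then verify by direct computation that the discriminant of the corresponding line $P-TQ$ fails to be a square in $\bQ(T)$. Since $\Delta(P-TQ)\in(\bQ(T))^2$ is necessary for the generic Galois group to equal $A_4$ (Theorem \ref{sqdisc}), this is enough to conclude that no alternating line can give rise to such an $f$.

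For Case 1, Lemma \ref{pqr} with $n=4$ produces $f$ directly: once the three branch points $\{b_1,b_2,b_3\}$ and the three ramified preimages $\{a_1,a_2,a_3\}$ are specified, $f$ is uniquely determined up to $\mathrm{PGL}_2\times\mathrm{PGL}_2$. Requiring that $f$ be defined over $\bQ$ forces $\Gal(\overline{\bQ}/\bQ)$ to permute the pairs $(a_i,b_i)$ compatibly, so the $\bQ$-forms are classified by a cubic étale $\bQ$-algebra $K$. I would first dispose of the split case $K=\bQ^3$ by normalizing both triples to $\{0,1,\infty\}$ via $\mathrm{PGL}_2(\bQ)$ on source and target; Lemma \ref{pqr} then hands us $P(X)=X^3(X-2)$ and $Q(X)=1-2X$. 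A direct calculation using the resultant identity from the proof of Theorem \ref{n-1square} (valid for $n=4$ with $(-1)^{n/2}=1$), together with $P'Q-PQ'=-6X^2(X-1)^2$ and $\Res(P,Q)=-3$, yields
\[
\Delta(P-TQ)=-432\,T^2(T-1)^2,
\]
and $-432\equiv-3\pmod{(\bQ^*)^2}$ is not a square. For the non-split twists, the same identity gives $\Delta(P-TQ)\cdot\Res(P,Q)=c^4\,\Res(P-TQ,R)^2$ where $R$ is the cubic with $P'Q-PQ'=c\cdot R^2$, and I would argue that the square class of $\Res(P,Q)$ modulo $(\bQ^*)^2$ is an invariant of the $\bQ$-form of $f$, equal to $-3$ throughout.

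For Case 2, I would adapt the template of Lemma \ref{pqr} to the profile $(3,1),(3,1),(2,2)$. Normalizing the two $(3,1)$-branches to $0,\infty$, the corresponding $(3,1)$-ramified preimages to $0,\infty$, and the $(2,2)$-branch to $1$, the requirement that $P-Q$ be a perfect square in $X$ forces the coefficients into the one-parameter family $f(X)=-4X^3(X-2s)/\bigl(s^3(4X+s)\bigr)$ with $s\in\bQ^*$; the two $(2,2)$-ramified preimages are the roots of $X^2-sX-s^2/2$, of discriminant $3s^2$, and so automatically lie in $\bQ(\sqrt{3})$ for every $s$. The same resultant identity yields
\[
\Delta(P-TQ)=-442368\,s^{12}T^2(T-1)^2,
\]
and $-442368=-2^{14}\cdot 3^3\equiv-3\pmod{(\bQ^*)^2}$, again not a square in $\bQ(T)$ for any $s$. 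Twists (notably the case where the two $(3,1)$-branches are Galois-conjugate quadratic points rather than rational) are handled by the same square-class argument.

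The main difficulty will be rigorously treating the Galois twists in Case 1, where the branch locus and ramified preimages need not be $\bQ$-rational but can correspond to any cubic étale $\bQ$-algebra. The cleanest resolution, I expect, is to identify the relevant square class of $\Res(P,Q)$ with a Galois-theoretic invariant of the underlying $A_4$-cover --- for instance, the discriminant of the fixed-field extension attached to the normal subgroup $V_4\triangleleft A_4$, which should be intrinsically $-3$ modulo squares in both Case 1 and Case 2 and hence obstruct $\Delta(P-TQ)$ from ever being a square.
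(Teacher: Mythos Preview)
Your split-case computations are correct, and the overall strategy --- pin down $f$ explicitly, compute $\Delta(P-TQ)$, and check its square class --- is the right one. The genuine gap is in the treatment of twists, and here your proposed resolution is not just incomplete but actually wrong for Case~2.

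Your expectation is that the square class of $\Delta(P-TQ)$ in $\bQ^\times/\bQ^{\times2}$ is intrinsically $-3$ for every $\bQ$-form of the cover. For Case~1 this turns out to be true, and the paper establishes it by a rather different route: it first observes that over the minimal field of definition $L$ of the six points the square class is $-3$, then uses the Galois action on the pairs $(p_i,p_i')$ to show $L=K(\sqrt{-3})$ with $[K:\bQ]\le 3$, and finally carries out an explicit parameterization over $K$ (involving a quadric surface in $\bP^3_K$ that turns out to be a split $\bP^1\times\bP^1$) to verify the square class is $-3$ over $K$ as well. Your one-line ``the square class of $\Res(P,Q)$ is an invariant of the $\bQ$-form'' does not substitute for this.

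For Case~2 your expectation fails outright. The paper normalizes the unique $(2,2)$-branch to $\infty$ and its two ramified preimages to $\pm\sqrt{d}$ for some $d\in\bQ^\times$, then shows via a residue computation that the square class of $\Delta(P-TQ)$ is $-d$, not $-3$. Your split normalization (both $(3,1)$-branches rational) forces $d=3$, which is why you see $-3$; but for instance taking $(u,c)=(2,1)$ in the paper's parameterization gives $d=11$ and a perfectly good $\bQ$-rational morphism with discriminant square class $-11$. So the obstruction is not a fixed class but the statement that $-d$ is never a square --- and this requires the paper's key observation that $d=3u^2-2uc+3c^2$ is a value of a \emph{positive-definite} form.

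Finally, your suggested fix --- identifying the square class with ``the discriminant of the fixed-field extension attached to $V_4\triangleleft A_4$'' --- is circular: that fixed field is the splitting field of the cubic resolvent, whose discriminant equals $\Delta(P-TQ)$ by Proposition~\ref{cubicr}, so it carries no information beyond what you started with.
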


\begin{proof}
Assume for the sake of contradiction that there exists a morphism with the aforementioned properties. We start with Case 1. Then there exist points $p_i,p_i'\in\bP^1_{\overline{\bQ}}$ such that the fiber above $p_i'$ contains the point $p_i$ with multiplicity $3$ for $i=1,2,3$. Denote by $L$ the minimal field of definition of the six points $p_i,p_i'$. By Lemma \ref{pqr} the morphism $f$ is uniquely determined by the points $p_i,p_i'$, and since all choices of $p_i,p_i'$ are equivalent up to automorphisms of $\bP^1_K$ the field extension of $K(T)$ corresponding to $f$ does not depend on the choice of $p_i,p_i'$. The computation in the proof of Theorem \ref{n-1square} (namely, equality \eqref{discp-tq}) shows that the discriminant of this unique field extension is $-3$, regarded as an element of $L^\times/L^{\times2}$. Since, on the other hand, this discriminant is a square, it follows that $-3$ is a square in $L$.

We now observe that the absolute Galois group $G_\bQ=\Gal(\overline{\bQ}/\bQ)$ acts on the set $\{\{p_1,p_1'\},\{p_2,p_2'\},\{p_3,p_3'\}\}$ by permutations. In other words, if $\sigma\in G_\bQ$ is such that $\sigma(p_i')=p_j'$, then $\sigma(p_i)=p_j$. This follows from the fact that $\sigma(p_i)$ must be a point that is contained with multiplicity $3$ on the fiber above $p_j'$, and the only such point is $p_j$. If this action fixes one point, then that point is defined over $\bQ$. Otherwise, each of the three points $p_1',p_2',p_3'$ has exactly three conjugates in $\bP^1_{\overline{\bQ}}$, and therefore each of them is defined over a cubic extension of $\bQ$ (which may not be the same for all three points).

We can therefore assume without loss of generality that $p_1'$ is defined over some field $K$ with $[K:\bQ]=3$ (we are \emph{not} stating that $K$ is the minimal field of definition of $p_1'$; it could also be $\bQ$ itself). Now every element of $\Gal(\overline{\bQ}/K)$ permutes $\{p_2,p_2'\}$ and $\{p_3,p_3'\}$, so that these two form a complete set of conjugates over $K$ and the minimal field of definition of the six points $p_i,p_i'$ (which we called $L$ above) is an extension of $K$ of degree at most $2$. On the other hand we showed that $L$ contains $\sqrt{-3}$. Since $\sqrt{-3}\notin K$, as $2$ does not divide $[K:\bQ]$, it follows that $L=K(\sqrt{-3})$.

To sum up, we showed that $p_1,p_1'$ are defined over $K$ and $p_2,p_2',p_3,p_3'$ are defined over $K(\sqrt{-3})$, with $\{p_2,p_2'\}$ and $\{p_3,p_3'\}$ being conjugates over $K$. Regarding $f$ now as a morphism of $\bP^1_K$, then $f$ corresponds to a field extension of $K(T)$, the discriminant of which is a square in $K(T)$. We will show that the properties stated in this paragraph are incompatible, thereby arriving at the desired contradiction.

For that we observe that any point of $\bP^1$ defined over $K(\sqrt{-3})$ but not over $K$ is equivalent to $\sqrt{-3}$ itself under an automorphism of $\bP^1$ defined over $K$ (here we are representing points of $\bP^1$ by their coordinate with respect to the standard inclusion of $\bbA^1$; as usual we denote by $\infty$ the point which is not covered by this inclusion). In fact any such point can be written as $s+u\sqrt{-3}$, for some $s,u\in K$ with $u\neq0$, whence it can be mapped to $\sqrt{-3}$ by the rational function $x\mapsto\frac{x-s}{u}$, which defines an automorphism of $\bP^1$ over $K$. Therefore we can assume without loss of generality that $p_2=p_2'=\sqrt{-3}$; changing $f$ by automorphisms of $\bP^1$ defined over $K$ does not change the corresponding field extension of $K(T)$. Note that this will force that $p_3=p_3'=-\sqrt{-3}$.

Now let $f$ be defined in homogeneous coordinates by
$$f([x:y])=[P(x,y):Q(x,y)]$$
where $P,Q\in K[x,y]$ are homogeneous, relatively prime quartic polynomials. The condition that $\sqrt{-3}$ lies with multiplicity $3$ on the fiber of $f$ above $\sqrt{-3}$ means that
$$(x-\sqrt{-3}y)^3\text{ divides } P(x,y)-\sqrt{-3} Q(x,y)\text{.}$$
The quotient between the right hand side and the left hand side is a linear form in $K[x,y]$, which can therefore be written as $L_1(x,y)+\sqrt{-3}L_2(x,y)$ where $L_1,L_2\in K[x,y]$ are linear forms. We therefore obtain
\begin{align*}
P(x,y)-\sqrt{-3}Q(x,y)&=(x-\sqrt{-3}y)^3(L_1(x,y)+\sqrt{-3}L_2(x,y))\\
&=(x^3-9xy^2)L_1(x,y)-9(y^3-x^2y)L_2(x,y)\\&+((x^3-9xy^2)L_2(x,y)+3(y^3-x^2y)L_1(x,y))\sqrt{-3}\text{.}
\end{align*}
It follows that
$$P(x,y)=(x^3-9xy^2)L_1(x,y)-9(y^3-x^2y)L_2(x,y)$$
and
$$Q(x,y)=-(x^3-9xy^2)L_2(x,y)-3(y^3-x^2y)L_1(x,y)\text{.}$$
Letting $L_1(x,y)=ax+by$ and $L_2(x,y)=cx+dy$, we remark that $ad-bc\neq0$, otherwise the forms $L_1$ and $L_2$ are not linearly independent and by the equalities above $P$ and $Q$ share a common factor. From the equalities above, $f$ corresponds to the rational function given in affine coordinates by
\begin{equation}
\label{morph}
f(x)=\frac{(x^3-9x)(ax+b)-9(1-x^2)(cx+d)}{-(x^3-9x)(cx+d)-3(1-x^2)(ax+b)}\text{.}
\end{equation}
One computes that
$$f'(x)=\frac{(x^2+3)^2}{(-(x^3-9x)(cx+d)-3(1-x^2)(ax+b))^2}\cdot(Ax^2+Bx+C)$$
where
\begin{align*}
&A=3a^2-ad+bc+9c^2\\
&B=6ab+18cd\\
&C=-3ad+3b^2+3bc+9d^2\text{.}
\end{align*}
The fact that $(x^2+3)^2$ is a factor of $f'$ indicates precisely that $f$ ramifies at $\pm\sqrt{-3}$ with multiplicity $3$. This already excludes the possibility that $\pm\sqrt{-3}$ are roots of $Ax^2+Bx+C$ (otherwise they would have higher ramification index) and, if this quadratic polynomial has two distinct roots, these correspond to two further ramification points of $f$, which we know do not exist. Therefore $Ax^2+Bx+C$ has a double root, meaning that $B^2-4AC=0$, and it turns out that
$$B^2-4AC=12(ad-bc)(3a^2-10ad+3d^2+b^2+10bc+9c^2)\text{.}$$
We have remarked before that $ad-bc\neq0$, and therefore
\begin{equation}
\label{quadric}
3a^2-10ad+3d^2+b^2+10bc+9c^2=0\text{.}
\end{equation}
One must therefore understand rational points on the surface in $\bP^3_K$ defined by \eqref{quadric}. This turns out to be particularly simple, as this quadric surface turns out to be isomorphic (over $K$) to $\bP^1_K\times\bP^1_K$. In fact, one may rewrite \eqref{quadric} as
$$(b+9c)(b+c)-(3d-a)(3a-d)=0$$
and we realize that under a linear invertible change of coordinates in $\bP^3$ our surface becomes the familiar Segre embedding of $\bP^1\times\bP^1$ into $\bP^3$. Solutions to the above are given by setting
$$(b+9c,b+c,3d-a,3a-d)=(pq,rs,pr,qs)$$
for some $p,q,r,s\in K$, which in turn gives
\begin{equation}
\label{abcd}
(a,b,c,d)=\left(\frac{pr+3qs}{8},\frac{9rs-pq}{8},\frac{pq-rs}{8},\frac{3pr+qs}{8}\right)\text{.}
\end{equation}
We now recall that, in light of the expression \eqref{morph} for $f$, the discriminant of the extension of $K(T)$ corresponding to $f$ is the discriminant of the polynomial $P^{\mathrm{ih}}-TQ^{\mathrm{ih}}$, where $P^\mathrm{ih}$ and $Q^\mathrm{ih}$ are the de-homogenizations of $P$ and $Q$ (the numerator and denominator of \eqref{morph}). Computing this discriminant and replacing $a,b,c,d$ according to \eqref{abcd} one arrives at
$$\Delta(P^\mathrm{ih}-TQ^\mathrm{ih})=-\frac{27}{64}(p^2+3s^2)^2(T^2+3)^2R(T)^2$$
with
\begin{align*}
R(T)&=9pr^3-9pq^2r+27qr^2s-3q^3s+(9pqr^2-pq^3-9r^3s+9q^2rs)T\text{.}
\end{align*}
Since $-3$ is not a square in $K$, this is not a square in $K(T)$, which is the desired contradiction. We observe that the argument we used shows in fact that strong alternating lines giving rise to Case 1 do not exist over any number field in which $-3$ is not a square. However, the method used in the proof allows us to construct many examples of strong alternating lines over $\bQ(\sqrt{-3})$.

We turn now to Case 2. Again we look at the action of the absolute Galois group $G_\bQ$ on the fibers of our morphism $f$. Since this action preserves the ramification pattern of the fibers, the (unique) fiber with ramification pattern $(2,2)$ is fixed and is therefore defined over $\bQ$, i.e. it is the fiber above some point in $\bP^1_{\overline{\bQ}}$ defined over $\bQ$. This point is therefore mapped to $\infty$ under an automorphism of $\bP^1_{\overline{\bQ}}$ defined over $\bQ$, and hence we may suppose without loss of generality that it \emph{is} $\infty$.

The points with multiplicity $2$ on the fiber of $f$ above $\infty$ are then also permuted by the action of $G_\bQ$, and are therefore either defined over $\bQ$ or conjugates of one another defined over some quadratic extension of $\bQ$, say $\bQ(\sqrt{d})$; this also includes the case when they are defined over $\bQ$, since we may then take $d=1$. Without loss of generality we may suppose that they are actually $\sqrt{d}$ and $-\sqrt{d}$, since any point in $\bP^1_{\overline{\bQ}}$ defined over $\bQ(\sqrt{d})$ is equivalent to $\sqrt{d}$ up to an automorphism defined over $\bQ$. In terms of the representation of $f$ as a rational function on $\bbA^1$ this means that the denominator of $f$ is $(x^2-d)^2$.

Once we differentiate $f$, we obtain therefore a rational function whose denominator is $(x^2-d)^3$, and moreover its zeros must correspond to the ramification points of $f$ that do not lie over $\infty$. Since each of these has multiplicity $3$, the corresponding factors of $f'$ have multiplicity $2$, and therefore the numerator of $f'$ is a square. We may therefore suppose that
$$f'(x)=\frac{(ax^2+bx+c)^2}{(x^2-d)^3}$$
for some $a,b,c\in\bQ$.

Regarding $f$ as a meromorphic function on $\bC$, we compute that
$$\mathrm{Res}_{\sqrt{d}}f'=\frac{3d^2a^2-2dac-db^2+3c^2}{16d^2\sqrt{d}}\text{,}$$
but on the other hand the above, being a residue of the derivative of a meromorphic function, must equal $0$. We therefore conclude that
$$3d^2a^2-2dac-db^2+3c^2=0\text{.}$$
From here we see first that $b\neq0$. In fact, if $b=0$, then the above becomes equivalent to
$$(3c-da)^2+8d^2a^2=0$$
which is plainly impossible. We may therefore assume without loss of generality that $b=1$, since this amounts to dividing $f$ by $b^2$. We introduce the quadratic form
$$q(x,y)=3x^2-2xy+3y^2$$
and observe that, with $u=ad$, the previous equality becomes equivalent to
\begin{equation}
\label{quc}
d=q(u,c)\text{.}
\end{equation}
We see therefore that
$$f'(x)=\frac{\left(\frac{u}{d}x^2+x+c\right)^2}{(x^2-d)^3}=\frac{\left(\frac{u}{q(u,c)}x^2+x+c\right)^2}{(x^2-q(u,c))^3}$$
and one checks that the above is the derivative of the rational function
$$\frac{-2u^2x^3-2uq(u,c)x^2-2c^2q(u,c)x+(u-c)q(u,c)^2}{2q(u,c)^2(x^2-q(u,c))^2}\text{.}$$
Then $f$ equals the above up to a constant, and since adding a constant or multiplying by a nonzero constant amounts to changing $f$ by automorphisms of $\bP^1$, we conclude that we may suppose that $f$ is given by
$$f(x)=\frac{-2u^2x^3-2uq(u,c)x^2-2c^2q(u,c)x+(u-c)q(u,c)^2}{(x^2-q(u,c))^2}\text{.}$$
We now compute the discriminant of the corresponding extension of $\bQ(T)$, that is, the discriminant of
$$-2u^2x^3-2uq(u,c)x^2-2c^2q(u,c)x+(u-c)q(u,c)^2-T(x^2-q(u,c))^2\text{.}$$
One might expect that this discriminant becomes a square over $\bQ(\sqrt{d})$, but surprisingly it actually becomes a square over $\bQ(\sqrt{-d})$. In fact, this discriminant equals
$$-16q(u,c)^3(u-c)^4R(T)^2=-16d^3(u-c)^4R(T)^2$$
where
$$R(T)=(12u^2-8cu+12c^2)T^2+(-15u^3+19cu^2-21c^2u+9c^3)T+(3u^4-2cu^3)\text{.}$$
It follows therefore that, if this discriminant is a square in $\bQ(T)$, then one must be able to take $d=-1$, i.e. $-1$ is represented by the quadratic form $q$, as implied by \eqref{quc}. This is a contradiction, since the quadratic form $q$ is positive definite, and therefore finishes the proof.
\end{proof}

\begin{proof}[Proof of Theorem \ref{nostrong}]
We have previously remarked that a strong alternating line of polynomials over $\bQ$ must give rise to a morphism fitting into either Case 1 or Case 2. Since there are no such morphisms according to Lemma \ref{prenostrong}, the result follows.
\end{proof}

As announced before, we have set the stage for the proof of Theorem \ref{hm}, which we give now.

\begin{proof}[Proof of Theorem \ref{hm}]
We again distinguish between strong alternating lines giving rise to Case 1 and those giving rise to Case 2. As remarked in the proof of Theorem \ref{nostrong}, the argument used to prove that there are no strong alternating lines over $\bQ$ giving rise to Case 1 actually works for proving that there are no strong alternating lines over any number field where $-3$ is not a square. Therefore there are no strong alternating lines giving rise to Case 1 over any quadratic number field apart from possibly $\bQ(\sqrt{-3})$. For this one, strong alternating lines giving rise to Case 1 actually exist; e.g. one may take the line used to prove Theorem \ref{evencase}, which is strong alternating over $\bQ(\sqrt{-3})$ and, coming from Lemma \ref{pqr} for $n=4$, gives rise to Case 1.

We may then focus on lines corresponding to Case 2. Again the argument used in the proof of Lemma \ref{prenostrong} carries through to $\bQ(\sqrt{m})$, except for the part where we used the positive-definiteness of the form $q$. But it is nevertheless true that, if a valid morphism $f$ exists, one must be able to take $d=-1$, so that, according to \eqref{quc}, $-1$ must be represented by the form $q$ over $\bQ(\sqrt{m})$. Conversely, if there exist $u,c\in\bQ(\sqrt{m})$ such that $-1=q(u,c)$, then the line of polynomials defined by
$$P(X)=(X^2-q(u,c))^2$$
and
$$Q(X)=-2u^2X^3-2uq(u,c)X^2-2c^2q(u,c)X+(u-c)q(u,c)^2$$
has Galois group $A_4$ over $\bQ(\sqrt{m})(T)$.

So we are reduced to determining those square-free integers $m$ for which the quadratic form $q$ represents $-1$ over $\bQ(\sqrt{m})$. In order to simplify things we will use classical notation from quadratic form theory, as in \cite{Lam}; in particular, when we talk about the discriminant of a non-degenerate quadratic form over a field $K$, we are regarding the discirminant as an element of $K^{\times}/K^{\times2}$. The form $q$ is equivalent over $\bQ$ to the diagonal form $\langle1,2\rangle$, since $q(x,y)=(x+y)^2+2(x-y)^2$. So according to \cite{Lam}, Corollary I.3.5 we want to determine those square-free values of $m$ for which the form
$$q_0=\langle1,2,1\rangle$$
is isotropic over $\bQ(\sqrt{m})$.

We claim that this happens if and only if the form
$$q_1=\langle1,2,1,2m\rangle$$
is isotropic over $\bQ$. Indeed, if $q_1$ is isotropic over $\bQ$, then $q_0$ represents $-2m$ (\cite{Lam}, Corollary I.3.5) and hence, by \cite{Lam} I.2.3, $q_0\cong\langle a,b,-2m\rangle$ for some $a,b\in\bQ^{\times}/\bQ^{\times2}$. The condition that $q_0$ and $\langle a,b,-2m\rangle$ have the same discriminant forces that $b=-am$, and hence $q_0\cong\langle a,-am,-2m\rangle$. But the latter form is clearly isotropic over $\bQ(\sqrt{m})$, since $a\cdot(\sqrt{m})^2-am\cdot1^2-2m\cdot0^2=0$, and hence $q_0$ is isotropic over $\bQ(\sqrt{m})$. Conversely, suppose that $q_0$ is isotropic over $\bQ(\sqrt{m})$. Since it is clearly not isotropic over $\bQ$, then \cite{Lam}, Theorem VII.3.1 forces that $q_0\cong\langle a,-am,k\rangle$ for some $a$ and some $k$. Now comparing discriminants forces that $2=-km$ in $\bQ^{\times}/\bQ^{\times2}$, or equivalently $k=-2m$. Therefore $q_0$ represents $-2m$, and hence $q_1$ is isotropic over $\bQ$.

Now, in order to check when $q_1$ is isotropic over $\bQ$, we appeal to the Hasse-Minkowski Theorem, which implies that this is equivalent to $q_1$ being isotropic over every completion of $\bQ$. We investigate each completion separately:
\begin{itemize}
\item Over $\bR$: clearly $q_1$ is isotropic if and only if $m<0$.
\item Over $\bQ_p$ for $p\neq 2$: then \cite{Lam}, Corollary VI.2.5(2) implies that already the $3$-dimensional form $\langle1,2,1\rangle$ is isotropic over $\bQ_p$, and therefore so is $q_1$.
\item Over $\bQ_2$: we recall that by \cite{Lam}, Corollary VI.2.15(1) any non-isotropic $4$-dimensional form over a local field has discriminant $1$.  So if $q_1$ is not isotropic over $\bQ_2$ then $4m=1$ in $\bQ_2^\times/\bQ_2^{\times2}$, i.e. $m$ is a square in $\bQ_2$. Since it is a well-known consequence of Hensel's Lemma that squares in $\bQ_2^\times$ are precisely those $2$-adic numbers of the form $4^a(8b+1)$, with $b\in\bZ_2$, and $m$ is assumed to be square-free, it follows that if $q_1$ is non-isotropic over $\bQ_2$ then $m\equiv1\pmod{8}$. Conversely, if $m\equiv1\pmod{8}$, then $m$ is a square in $\bQ_2$ and $q_1\cong\langle1,2,2,1\rangle$, and the latter form is not isotropic over $\bQ_2$.
\end{itemize}
It follows then from the Hasse-Minkowski Theorem that $q_1$ is isotropic over $\bQ$ if and only if $m<0$ and $m\not\equiv1\pmod{8}$, as desired.

\end{proof}

We now know that any alternating line of polynomials of degree $4$ is weak, and we move to a classification of such lines. An interesting feature of the algebra involved is the role played by isomorphisms between $\bP^1$ and non-degenerate conics in $\bP^2$, which are familiar for example from the classification of Pythagorean triples. This is already visible in the following preliminary lemma.

\begin{lemma}
\label{nofixed}
Let $K$ be a number field. Suppose $f:\bP^1_{\overline{\bQ}}\to\bP^1_{\overline{\bQ}}$ is a morphism of degree $4$ defined over $K$ that ramifies above exactly $3$ points with ramification pattern $(2,2)$, and these $3$ points are all defined over $K$. Then the Galois group of the Galois closure of the corresponding extension of $K(T)$ is isomorphic to $V_4$, the Klein four-group.
\end{lemma}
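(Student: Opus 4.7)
The plan is to first determine the Galois group over $\overline{\bQ}(T)$ using Theorem \ref{cover}, and then exploit the $K$-rationality of the branch points to descend the Galois structure to $K(T)$. Applying Theorem \ref{cover} with the three ramification patterns $(2,2)$ produces three double transpositions in $S_4$ whose product is the identity; the three non-identity elements of the Klein four-group $V_4\subset S_4$ are the only such triple, so the geometric Galois group is contained in $V_4$. Since the cover is connected, this group must be transitive, which forces it to be all of $V_4$. As $|V_4|=4=[\overline{\bQ}(\bP^1):\overline{\bQ}(T)]$, the extension $\overline{\bQ}(\bP^1)/\overline{\bQ}(T)$ is already its own Galois closure, and accordingly the four deck transformations $\tau_0=\mathrm{id},\tau_1,\tau_2,\tau_3\in\mathrm{PGL}_2(\overline{\bQ})$ satisfying $f\circ\tau_j=f$ form a group isomorphic to $V_4$.

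Next, I would give a geometric characterization of each $\tau_i$. For each branch point $y_i$, orbit--stabilizer applied to the ramification pattern $(2,2)$ shows that the stabilizer in $V_4$ of either of the two points in $f^{-1}(y_i)$ has order $2$, and since $V_4$ is abelian these two stabilizers coincide; call the common subgroup $\langle\tau_i\rangle$. Because a nontrivial element of $\mathrm{PGL}_2(\overline{\bQ})$ has exactly two fixed points on $\bP^1_{\overline{\bQ}}$, the deck transformation $\tau_i$ is uniquely characterized as the nontrivial element of $V_4$ whose fixed locus equals $f^{-1}(y_i)$. Now $\Gal(\overline{\bQ}/K)$ acts on $\mathrm{PGL}_2(\overline{\bQ})$ by acting on the coefficients, and this action preserves the set of deck transformations, since $f$ is defined over $K$. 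A routine computation shows that $\mathrm{Fix}(\sigma\cdot\tau_i)=\sigma(\mathrm{Fix}(\tau_i))=\sigma(f^{-1}(y_i))=f^{-1}(\sigma(y_i))=f^{-1}(y_i)$, where the last equality uses the crucial hypothesis $y_i\in\bP^1(K)$. By uniqueness, $\sigma\cdot\tau_i=\tau_i$, so each $\tau_i$ is defined over $K$.

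Since $V_4$ then acts on $\bP^1_K$ by $K$-rational automorphisms commuting with $f$, the extension $K(\bP^1)/K(T)$ admits four $K(T)$-automorphisms; as $[K(\bP^1):K(T)]=4$, this extension is itself Galois with group $V_4$, so its Galois closure coincides with it and has Galois group $V_4$, as required. The main obstacle is precisely the Galois descent in the second step: the numerical ramification pattern alone is automatically Galois-invariant, but to descend the entire $V_4$-action one needs each individual deck transformation to be Galois-fixed, which in turn requires the strengthening from ``the set of three branch points is Galois-stable'' to ``each branch point is individually $K$-rational.''
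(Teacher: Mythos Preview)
Your proof is correct and takes a genuinely different route from the paper's. The paper proceeds computationally: after normalizing the three branch points to $0,1,\infty$, it writes $f$ in homogeneous coordinates as $[ap(x,y)^2:-bq(x,y)^2]$ with $ap^2+bq^2+cr^2=0$, uses the geometry of the associated plane conic (and the rational point forced by this identity) to reduce to a single explicit rational function, and then checks by direct calculation that the cubic resolvent of the resulting quartic in $K(T)[X]$ splits into three linear factors, invoking Proposition~\ref{cubicr} to conclude. Your approach is structural instead: you first pin down the geometric Galois group as $V_4$ via Theorem~\ref{cover}, realize it as deck transformations in $\mathrm{PGL}_2(\overline{\bQ})$, and then descend each $\tau_i$ to $K$ by characterizing it through its fixed locus $f^{-1}(y_i)$, which is Galois-stable exactly because $y_i\in\bP^1(K)$. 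This argument is cleaner and makes transparent why $K$-rationality of the \emph{individual} branch points (not merely of the branch divisor) is the operative hypothesis; the paper's explicit parametrization, by contrast, yields formulas that feed directly into the proof of Theorem~\ref{class4}. One small imprecision worth fixing: a nontrivial element of $\mathrm{PGL}_2(\overline{\bQ})$ need not have exactly two fixed points (parabolic elements have one), but your argument only requires ``at most two,'' which is true, and since $\tau_i$ already fixes the two points of $f^{-1}(y_i)$ the characterization goes through unchanged.
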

\begin{proof}
Up to automorphisms of $\bP^1_{\overline{\bQ}}$ defined over $K$, we can assume without loss of generality that the points above which $f$ ramifies are precisely $0$, $1$ and $\infty$. The fact that $f$ ramifies above $0$ and $\infty$ with ramification pattern $(2,2)$ means that $f$ can be expressed in homogeneous coordinates as
$$f([x:y])=[ap(x,y)^2:-bq(x,y)^2]$$
where $a,b\in K$ and $p,q\in K[x,y]$ are quadratic forms. The fact that $f$ ramifies above $1$ with ramification pattern $(2,2)$ then implies that $ap(x,y)^2+bq(x,y)^2$ is a square in $\overline{\bQ}[x,y]$, yielding a polynomial identity of the form
\begin{equation}
\label{con}
ap(x,y)^2+bq(x,y)^2+cr(x,y)^2=0
\end{equation}
where $a,b,c\in K$ and $p,q,r\in K[x,y]$ are quadratic forms. If this identity holds, then in particular the conic defined by $aX^2+bY^2+cZ^2=0$ in $\bP^2_K$ has a rational point. If this point has homogeneous coordinates $[\alpha:\beta:\gamma]$, then we may replace $p,q,r$ by $\frac{p}{\alpha},\frac{q}{\beta},\frac{r}{\gamma}$ at the expense of replacing $a,b,c$ by $a\alpha^2,b\beta^2,c\gamma^2$, and now the coefficients sum to $0$. So we may assume without loss of generality that $a+b+c=0$.

Now the equality \eqref{con} implies that
$$\varphi([x:y])=[p(x,y):q(x,y):r(x,y)]$$
defines a degree $2$ morphism from $\bP^1_K$ to the conic $C\subseteq\bP^2_K$ defined by $aX^2+bY^2-(a+b)Z^2=0$. Since this morphism is, up to a linear change of variables, the Veronese embedding of $\bP^1_K$ in $\bP^2_K$ arising from the very ample line bundle $\mathcal{O}_{\bP^1}(2)$, this morphism is automatically an isomorphism. In particular any two such morphisms differ by an automorphism of $\bP^1_K$, and hence the corresponding morphisms $f$ define isomorphic extensions of $K(T)$. So we may without loss of generality work with a specific morphism $\varphi$ as above.

In order to find an appropriate morphism $\varphi$ we use the classical geometric trick of intersecting $C$ with a generic line passing through the rational point $[1:1:1]$. The second intersection of the line through $[1:1:1]$ and $[x:y:0]$ with $C$ turns out to be $[-ax^2+by^2-2bxy:ax^2-by^2-2axy:ax^2+by^2]$, and hence we obtain the morphism
$$\varphi([x:y])=[-ax^2+by^2-2bxy:ax^2-by^2-2axy:ax^2+by^2]\text{.}$$
We obtain, up to a constant, the rational function in affine coordinates
$$f(x)=\frac{(-ax^2-2bx+b)^2}{(ax^2-2ax-b)^2}\text{.}$$
In order to prove that the Galois group of the Galois closure of the corresponding extension of $K(T)$ is $V_4$, we simply compute the cubic resolvent of the polynomial
$$(-aX^2-2bX+b)^2-T((-aX^2-2bX+b)^2-(aX^2-2aX-b)^2)\text{.}$$
This turns out to have cubic resolvent
$$(aX+2b)(aX+2b-4(a+b)T)(a^2X-2ab-4b^2+4b(a+b)T)$$
which factors into linear factors over $K(T)$, and hence the desired Galois group over $K(T)$ is $V_4$, according to Proposition \ref{cubicr}, as desired.
\end{proof}

\begin{proof}[Proof of Theorem \ref{class4}]
Given a line of polynomials of degree $4$ with Galois group $A_4$ over $\bQ(T)$, we assign to it a morphism $f:\bP^1_{\bQ}\to\bP^1_{\bQ}$ in the usual way, and we observe that Lemma \ref{prenostrong} implies that this morphism must fit in the description of Case 3, since Cases 1 and 2 are impossible. It must therefore ramify over exactly $3$ points on the target, and each fiber must have ramification pattern $(2,2)$. Up to changing $f$ by an automorphism of $\bP^1_\bQ$ we may suppose without loss of generality that $f$ maps $\infty$ to $\infty$. Now the absolute Galois group $G_\bQ$ of $\bQ$ acts on the points on the target $\bP^1_\bQ$ over which $f$ ramifies. We will use Lemma \ref{nofixed} to prove that this action does not have any fixed point.

If the action of $G_\bQ$ fixes one of these points, then either it also fixes the other two, in which case all these points are defined over $\bQ$, or the other two are permuted, in which case all three points are defined over a quadratic extension of $\bQ$; the first conclusion implies the second, so we may suppose without loss of generality that all three points are defined over a quadratic extension of $\bQ$. We now look at the cubic resolvent of the polynomial $P-TQ$ in $\bQ(T)$ corresponding to the morphism $f$, and apply Proposition \ref{cubicr}. Since the discriminant of $P-TQ$ is a square, so is the discriminant of the cubic resolvent, and a cubic polynomial with square discriminant is either irreducible or a product of three linear factors. If the second case holds, then the Galois group of $P-TQ$ over $\bQ(T)$ is $V_4$, a contradiction. So the cubic resolvent of $P-TQ$ must be irreducible. But, according to Theorem \ref{cover}, the Galois group of $P-TQ$ over $\bQ(T)$ is $V_4$, since this is the only transitive subgroup of $S_4$ generated by permutations of cycle type $(2,2)$. Therefore the cubic resolvent of $P-TQ$ must factor into linear factors over some extension $K$ of $\bQ$, which we choose minimally. None of these factors lies in $\bQ(T)$, and they are permuted by the obvious action of $G_\bQ$, so this extension must be a cubic extension. We conclude that the Galois group of $P-TQ$ is $V_4$ over $L(T)$, where $L$ is a number field, if and only if $L$ contains the cubic field $K$. On the other hand, since we are assuming that all three points above which $f$ ramifies are defined over a quadratic extension of $\bQ$, Lemma \ref{nofixed} implies that $P-TQ$ has Galois group $V_4$ over that quadratic extension. This is a contradiction.

Therefore the points over which $f$ ramifies form a complete orbit under the action of $G_\bQ$. We can therefore assume that they are the roots of a cubic polynomial, irreducible over $\bQ$, and by translating them appropriately (which amounts to changing our morphism $f$ by an automorphism of $\bP^1_{\bQ}$ on the target, and therefore producing an equivalent line) we can assume that this polynomial is of the form $X^3+kX+m$. Let $\alpha$ be a root of $X^3+kX+m$, and write $f([x:y])=[P(x,y):Q(x,y)]$ in homogeneous coordinates, where as usual $P,Q\in\bQ[x,y]$ are relatively prime homogeneous quartic forms. Then the fact that $f$ ramifies with ramification pattern $(2,2)$ above $\alpha$ means that $P-\alpha Q$ is a square in $\bQ(\alpha)(x,y)$, up to a constant in $\bQ(\alpha)$. In other words we have
\begin{equation}
\label{paq}
P-\alpha Q=u(R+\alpha S+\alpha^2 U)^2
\end{equation}
for some $u\in\mathbb{Q}(\alpha)$ and $R,S,U$ quadratic forms in $\mathbb{Q}[x,y]$. We observe that $u$ is only determined up to multiplication by squares in $\bQ(\alpha)^\times$. Now note that, evaluating \eqref{paq} at $[x:y]=[1:0]$, we have
$$P(1,0)=u(R(1,0)+\alpha S(1,0)+\alpha^2 U(1,0))^2\text{,}$$
where we used that $Q(1,0)=0$, which follows from the assumption that $f$ maps $\infty$ to $\infty$. Since $P(1,0)\in\bQ$ this implies that there is a rational number in the coset of $u$ in $\bQ(\alpha)^\times/\bQ(\alpha)^{\times2}$, and so we may suppose without loss of generality that $u\in\bQ$. Finally, by replacing $P$ and $Q$ by $\frac{P}{u}$ and $\frac{Q}{u}$, respectively, we see that we may suppose without loss of generality that $u=1$. We obtain therefore
\begin{align*}
P-\alpha Q&=(R+\alpha S+\alpha^2 U)^2\\
&=(R^2-2mSU)+\alpha(2RS-2kSU-mU^2)+\alpha^2(S^2+2RU-kU^2)\text{.}
\end{align*}
Since $1,\alpha,\alpha^2$ are linearly independent over $\bQ$, it follows that
\begin{align}
&P=R^2-2mSU\text{,}\label{P}\\
&Q=-2RS+2kSU+mU^2\label{Q}\text{, and}\\
&S^2+2RU-kU^2=0\text{.}\label{R}
\end{align}
We are now in a similar situation to that which we came across in Lemma \ref{nofixed}: by \eqref{R} we see that
$$\varphi([x:y])=[R(x,y):S(x,y):U(x,y)]$$
defines a morphism between $\bP^1_\bQ$ and the conic $C\subseteq\bP^2_\bQ$ given by the homogeneous equation $Y^2+2XZ-kZ^2=0$. Again any such morphism, with $R,S,U$ given by quadratic forms, must be an isomorphism. Moreover, two different such morphisms $\bP^1_\bQ\to C$ differ by an isomorphism of $\bP^1$, and as such they yield equivalent lines (observe that $P$ and $Q$ are determined by $R,S,U$ via \eqref{P} and \eqref{Q}). We may then restrict attention to a specific morphism $\bP^1_\bQ$ to $C$, and to find one we observe that $[1:0:0]$ is a rational point on $C$ and the second intersection between the line through $[1:0:0]$ and $[0:x:y]$ with $C$ is given by
$$[x^2-ky^2:-2xy:-2y^2]\text{.}$$
We may then suppose without loss of generality that $R(x,y)=x^2-ky^2$, $S(x,y)=-2xy$ and $U(x,y)=-2y^2$. Now we use \eqref{P} and \eqref{Q} to conclude that
$$P(x,y)=x^4-2kx^2y^2-8mxy^3+k^2y^4$$
and
$$Q(x,y)=4x^3y+4kxy^3+4my^4\text{.}$$
Upon dividing $Q$ by $4$, which does not change the line up to equivalence, we see that we obtain the morphism of $\bP^1_\bQ$ given in affine coordinates by
$$f(x)=\frac{x^4-2kx^2-8mx+k^2}{x^3+kx+m}\text{.}$$
This corresponds to the line claimed in the statement of Theorem \ref{class4}.

To see that $-4k^3-27m^2$ has to be a square in $\bQ$, we compute the discriminant of
$$X^4-2kX^2-8mX+k^2-T(X^3+kX+m)\text{,}$$
which must be a square in $\bQ(T)$. This equals
$$(-4k^3-27m^2)(T^3+16kT+64m)^2$$
which is indeed a square in $\bQ(T)$ if and only if $-4k^3-27m^2$ is a square in $\bQ$, as desired.
\end{proof}

\providecommand{\bysame}{\leavevmode\hbox to3em{\hrulefill}\thinspace}

\end{document}